\newcommand{\Mr}{\mathcal{M}_{\le r}}
\newcommand{\wMr}{\mathcal{\widehat{M}}_r}
\newcommand{\specificthanks}[1]{\@fnsymbol{#1}}
\newtheorem*{remark}{Remark}
\author{
	Valentin Khrulkov\thanks{Skolkovo Institute of Science and Technology 143025, Nobel St. 3, Skolkovo Innovation Center Moscow, Russia.}
	\and
	Ivan Oseledets\footnotemark[1] \thanks{Institute of Numerical Mathematics of the Russian Academy of Sciences 119333, Gubkina St. 8 
			Moscow, Russia .}
}
\title{Desingularization of bounded-rank matrix sets}
\begin{document}
\maketitle
\begin{abstract}
The conventional ways to solve optimization problems on low-rank matrix sets which appear in a great number of applications tend to ignore its underlying structure of an algebraic variety and existence of singular points. This leads to the appearance of inverses of singular values in algorithms and since they could be close to $0$ it causes certain problems. We tackle this problem by utilizing ideas from algebraic geometry and show how to desingularize these sets. Our main result is an algorithm which uses only bounded functions of singular values and hence does not suffer from the issue described above. 
\end{abstract}
\begin{keywords}
	low-rank matrices, algebraic geometry, Riemannian optimization, matrix completion
\end{keywords}

\begin{AMS}
	65F30
\end{AMS}
\section{Introduction}
Although low-rank matrices appear in many applications, the structure of the corresponding matrix variety (real algebraic) is not fully utilized in the computations, and the theoretical investigation is complicated because of the existence of singular points \cite{lakshmibai2015grassmannian} on such a variety, which correspond to matrices of smaller rank. We tackle this problem by utilizing the modified Room-Kempf desingularization \cite{naldi2015exact} of determinantal varieties that
is classical in algebraic geometry, but has never been applied in the context of optimization over matrix varieties.
 Briefly, it can be summarized as follows. Idea of the the Room-Kempf procedure is to consider a set of tuples of matrices $(A,Y)$ satisfying equations $AY=0$ and $BY=0$ for some fixed matrix $B$. These equations imply that the rank of $A$ is bounded and moreover a set of such tuples is a smooth manifold (for reasonable matrices $B$). However, conditions of the form $BY=0$ can be numerically unstable, so we modify it by imposing the condition $Y^T Y = I$ instead. The precise definition of the manifold we work with is given in terms of Grassmannians and then we transition to the formulas given above. We also show that the dimension of this manifold is the same as of the original matrix variety.
Our main contributions are:

\begin{itemize}
    \item We propose and analyze a modified Room-Kempf desingularization technique for the variety of matrices of shape $n \times m$ with rank bounded by $r$ (\cref{notsing:secdes}). 
    \item We prove smoothness and obtain bounds on the curvature of the desingularized variety in \cref{notsing:secdes} and \cref{sec:ts-desing}. The latter is performed by estimating singular values of the operator of the orthogonal projection onto the tangent space of the desingularized variety. 
    \item We find an effective low-dimensional parametrization of the tangent space (\cref{notsing:tspar}). Even though the desingularized variety is a subset of a space of much bigger dimension, this allows us to construct robust second order method with $O((n+m)r)$ complexity.  
    \item We implement an effective realization of a reduced Hessian method for the optimization over the desingularized variety (\cref{notsing:secnewton}). We start with the Lagrange multipliers method for which we derive a formula for the Newton method for the corresponding optimization problem. The latter takes the saddle point form which we solve using the null space method found in \cite{benzi2005numerical}. In \cref{sec:trick} we show how to reduce the total complexity of the algorithm to $O((n+m)r)$ per iteration.
    \item We also briefly discuss a few technical details in the implementation of the algorithm (\cref{notsing:techstuff})
    \item We present results of numerical experiments and compare them with some other methods found in \cref{notsing:numerical}. 
\end{itemize}

The manifolds that we work with in this paper will always be $C^{\infty}$ and in fact smooth algebraic varieties.
\subsection{Idea of desingularization} \label{notsing:intro}

Before we define desingularization of bounded rank matrix sets, we will introduce its basic idea. The low-rank matrix case will be described in next section.
Let $V$ be a variety (not necessarily smooth) and $f$ be a function
$$f : V \to \mathbb{R}, $$

which is smooth in an open neighborhood of $V$ (which is assumed to be embedded in $\mathbb{R}^{k}$).
To solve 
$$ f(x) \to \min,  x \in V,$$
we often use methods involving the tangent bundle of $V$. However, due to the existence of the singular points where the tangent space is not well-defined, it is hard to prove correctness and convergence using those methods. To avoid this problem we construct a smooth variety $\widehat{V}$ and a surjective smooth map $\pi$
$$\pi: \widehat{V} \to V.$$
Let $\widehat{f}$ be a pullback of $f$ via map $\pi$ i.e.
$$ \widehat{f} : \widehat{V} \to \mathbb{R},$$
$$ \widehat{f} = f \circ \pi.$$
It is obvious that 
$$ \min_{x \in V} f(x) = \min_{y \in \widehat{V}} \widehat{f}(y),$$
so we reduced our non-smooth minimization problem to a smooth one. Typically $\widehat{V}$ is a variety in a space of bigger dimension and is constructed to be of the same dimension as the smooth part of $V$.
To have some geometrical idea one can think about the following example (see \cref{fig:bu}).
Let $V$ be a cubic curve given by the following equation
$$y^2 = x^2 (x+1),$$
and parametrized as
$$(x(t),y(t)) = (t^2-1,t(t^2-1)).$$
It is easy to see that $(0,0)$ is a singular point of $V$.
Then its desingularization is given by 
$$\widehat{V} = (x(t),y(t),z(t)) = (t^2-1,t(t^2-1),t) \subset \mathbb{R}^3,$$ which is clearly smooth. Projection is then just 
$$ \pi: (x(t),y(t),z(t)) = (x(t),y(t)).$$ 

\begin{figure}[h] 
	\begin{subfigure}{.4\textwidth}
		\centering
		\includegraphics[width=.5\linewidth]{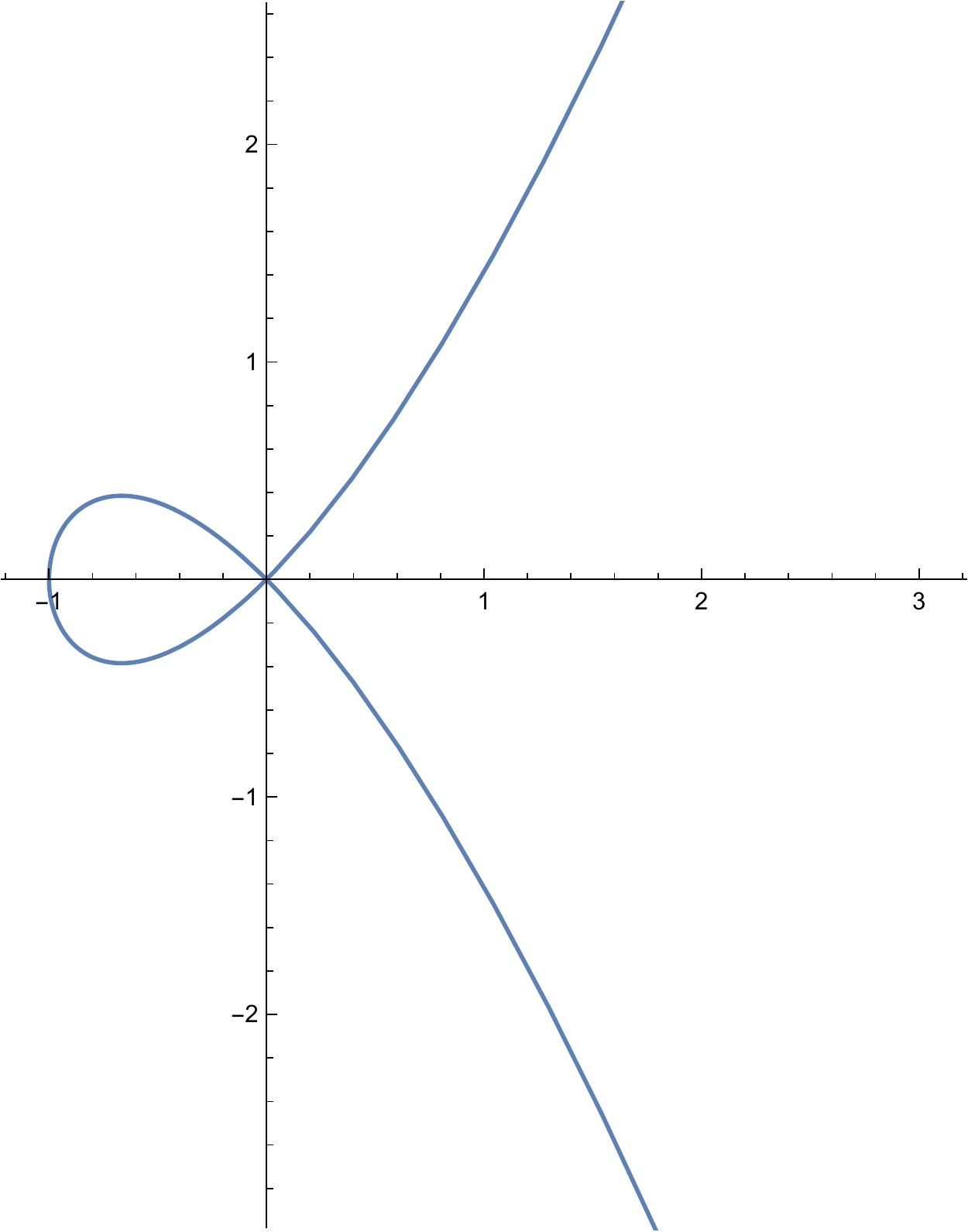}
        \subcaption{Singular cubic}
	\end{subfigure}
	\begin{subfigure}{.4\textwidth}
		\centering
		\includegraphics[width=.5\linewidth]{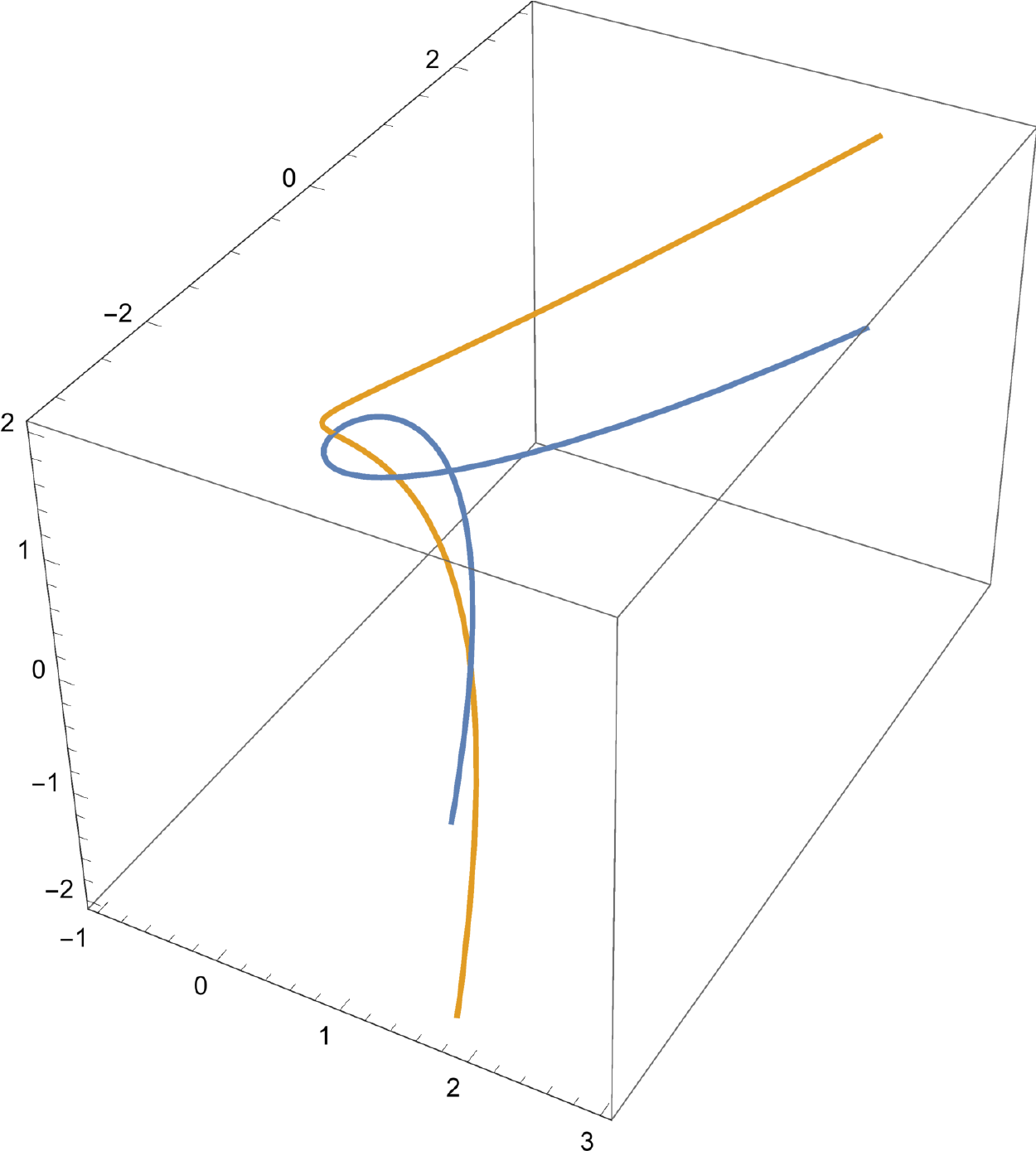}
		\subcaption{Desingularized cubic}
	\end{subfigure}

	\caption{Desingularization of the cubic.}
	\label{fig:bu}
\end{figure}

\section{Desingularization of low-rank matrix varieties via kernel} 
\subsection{$2\times 2$ matrices} \label{notsing:2x2sec}
Let $V$ be a variety of $2 \times 2$ matrices with the rank $\leq 1$. We have 
\begin{equation}\label{notsing:2x2sing}
V = \lbrace (x_{11},x_{21},x_{12},x_{22})  \in \mathbb{R}^4  : x_{11}x_{22}-x_{12}x_{21} = 0 \rbrace ,
\end{equation}
so it is indeed an algebraic variety. In order to analyze its smoothness and compute the tangent space we recall the following result. 

Let $h_i \quad i \in \lbrace 1 \hdots k \rbrace $ be some smooth functions 
$$ h_i : \mathbb{R}^l \to \mathbb{R},$$ with $k \leq l.$
Define the set $M$ as 
$$ M = \lbrace x \in \mathbb{R}^{l} : h_1(x)=0, h_2(x)=0  \ldots h_k(x)=0  \rbrace.$$
Then for a point $p \in M$
we construct the matrix $N(p)$,
$$N(p) = 
\begin{bmatrix}
\nabla h_1(p) \\
\nabla h_2(p) \\
\vdots \\
\nabla h_k(p)
\end{bmatrix},
$$
where $\nabla h_i(p)$ is understood as the row vector
$$\nabla h_i(p) = \left( \frac{\partial h_i}{\partial x_1}, \hdots, \frac{\partial h_i}{\partial x_l} \right).$$
A point $p$ is called nonsingular if $N(p)$ has maximal row rank at $p$. In this case, by implicit function theorem, $M$ is locally a manifold (see \cite[Theorem 5.22]{lee-introman-2001}) and tangent space at $p$ is defined as  
$$T_p M = \lbrace v \in \mathbb{R}^l : N(p)v = 0 \rbrace .$$
Applying this to $V$ defined in \cref{notsing:2x2sing} we obtain
$$N(x_{11},  x_{21}, x_{12},  x_{22}) = \begin{bmatrix}
x_{22} & -x_{12} & -x_{21} & x_{11}
\end{bmatrix},
$$
and then $(0,0,0,0)$ is a singular point of $V$. 

We desingularize it by considering $\widehat{V}$ which is defined as the set of pairs $(A,Y) \in \mathbb{R}^{2 \times 2} \times \mathbb{R}^2$ with coordinates
$$A = 
\begin{bmatrix}
x_{11} & x_{12} \\
x_{21} & x_{22}
\end{bmatrix},
$$ and 
$$
Y = \begin{bmatrix}
y_1\\
y_2
\end{bmatrix},
$$ 
satisfying $$AY = 0,$$ and $$Y^{\top} Y = 1.$$ 
Such choice of equations for $Y$ is based on the Room-Kempf procedure described in \cite{naldi2015exact}, which suggests the following equations:
$$AY=0, \quad BY=0,$$ with some fixed matrix $B$. Since the latter equation is numerically unstable, using an orthogonality condition instead allows us to maintain the manifold property while making computations more robust.

More explicitly we have
$$\widehat{V} =  \lbrace p: (x_{11}y_1+x_{12}y_2=0,x_{21}y_1+x_{22}y_2=0,y_1^2+y_2^2=1)\rbrace,
$$
$$
p = (x_{11},  x_{21}, x_{12},  x_{22}, y_1, y_2) \in \mathbb{R}^6.
$$
We find that the normal space at $p$ is spanned by rows of the following matrix $N(p)$:
   \begin{equation}\label{notsing:n2nullspace}
   N(p) = \begin{bmatrix} y_1 & 0 & y_2 & 0 &x_{11} & x_{12}\\
   0 & y_1 & 0 & y_2 & x_{21} & x_{22}\\
   0 & 0 & 0 & 0 & 2y_1&2y_2  \\
   \end{bmatrix}.
   \end{equation}

Since $y_1^2+y_2^2=1$ the matrix \cref{notsing:n2nullspace} clearly has rank $3$ at any point of $\widehat{V}$ which proves that $\widehat{V}$ is smooth. The projection $\pi$ is just $$\pi : (x_{11},  x_{21}, x_{12},  x_{22}, y_1, y_2) \to (x_{11},  x_{21}, x_{12},  x_{22}),$$ whose image is the entire $V$. 
However, we would also like to estimate how close the tangent spaces are at close points. Recall that by definition of the Grassmanian metric the distance between subspaces $C$ and $D$ is given by
$$d_{Gr}(C,D) := \|P_{C} - P_{D}\|_F,$$
where $P_{C}$ and $P_{D}$ are the orthogonal projectors on the corresponding planes.
Since {$P_{C^{\perp}} = I - P_C$ the distance between any two subspaces is equal to the distance between their orthogonal complements. 
\\
It is well known that the projection on the subspace spanned by the rows of a matrix $M$ is given by 
$M^{\dagger}M$, where $M^{\dagger}$ is a pseudoinverse which for matrices of maximal row rank is defined as $$M^{\dagger} =M^{\top} (M M^{\top})^{-1}.$$
Hence, for two different points $p$ and $p'$ on the desingularized manifold we obtain
$$ \|P_{N(p)} - P_{N(p')}\|_F = \| N(p)^{\dagger}N(p) - N(p')^{\dagger} N(p') \|_F  .$$
We will use the following classical result to estimate $\|P_{N(p)} - P_{N(p')}\|_F$ (we use it in the form appearing in \cite[Lemma~3.4]{dutta2017problem} which is based on the \cite[The $\sin \theta$ Theorem]{davis1970rotation}):
\begin{equation} \label{notsing:projectorbound}
\| N(p)^{\dagger}N(p) - N(p')^{\dagger} N(p') \|_F  \leq 2\max \lbrace \| N(p)^{\dagger} \|_2,
\| N(p')^{\dagger} \|_2 \rbrace \| N(p) - N(p') \|_F.
\end{equation}
In order to estimate the smoothness we need to estimate how $P_{N(p)}$ changes under small changes of $p$. It is sufficient to estimate the gradient of $P$.
Thus, we have to uniformly bound $\Vert N^{\dagger}\Vert_2$ from above, which is equivalent to bounding the minimal singular value of $N$ from below. Denote the latter by $\sigma_{\min}(N)$. 
By taking the defining equations of the desingularized manifold into account, we find that
   \begin{equation}\label{notsing:n2sing}
   N(p) N(p)^{\top} = \begin{bmatrix} 1+x_{11}^2+x_{12}^2 & x_{11} x_{21}+x_{12} x_{22}& 0 \\
   x_{11} x_{21}+x_{12} x_{22} & 1+x_{21}^2+x_{22}^2 & 0 \\
   0 & 0 & 4 
   \end{bmatrix}.
   \end{equation}
Hence $\sigma_{\min}^2 (N(a)) \geq 1$ and $\| N(a)^{\dagger} \|_2 \leq 1 $.
From the definition of $N(p)$ it follows that for $p=(A,Y)$ and $ p'=(A', Y')$: $$\| N(p) - N(p') \|_F \leq \sqrt{6}\| Y - Y' \|_F + \| A - A' \|_F.$$ and from \cref{notsing:projectorbound} we obtain
$$d_{Gr}(T_p \widehat{V}, T_{p'} \widehat{V}) \leq 2 \sqrt{6} (\|A-A'\|_F + \|Y-Y'\|_F).$$ 
We will derive and prove similar estimates for the general case in the next section.
\subsection{General construction and estimation of curvature} \label{notsing:secdes}
\begin{remark}\label{remark}
We will often use vectorization of matrices which is a linear operator
$$\mathrm{vec} : \mathbb{R}^{m \times n} \to \mathbb{R}^{mn \times 1},$$
which acts by stacking columns of the matrix into a single column vector. To further simplify notation, variables denoted by uppercase and lowercase variables are understood as a matrix and vectorization of the corresponding matrix, e.g. $p =\mathrm{vec}({P}).$ 
We will also define the transposition operator $T_{m,n}$:
$$T_{m,n} : \mathrm{vec}(X) \to \mathrm{vec}(X^{\top}),$$
for $X \in \mathbb{R}^{m \times n}$.
\end{remark}
Consider a variety $\Mr$ of $n \times m$ of matrices of rank not higher than $r$, 
$$\Mr = \{ A \in \mathbb{R}^{n \times m}: \mathrm{rank}(A) \leq r \}.$$

We recall the following classical result \cite[Theorem 10.3.3]{lakshmibai2015grassmannian}.
\begin{lemma}
$A \in \Mr$ is a singular point if and only if $A$ has rank smaller than $r$.
\end{lemma}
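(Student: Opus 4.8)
The plan is to characterize the singular points of $\Mr$ by computing the dimension of its Zariski (or, since we work over $\mathbb{R}$, its tangent cone) at a point $A$ and comparing it to the local dimension of the variety. Recall that $\Mr$ is irreducible of dimension $r(n+m-r)$, and that the smooth locus is precisely the set $\mathcal{M}_r$ of matrices of rank \emph{exactly} $r$ — this is a classical fact, and on it the tangent space at $A = U\Sigma V^{\top}$ is $\{ UX^{\top} + YV^{\top} : X \in \mathbb{R}^{m\times r}, Y \in \mathbb{R}^{n\times r}\}$, which has dimension $r(n+m-r)$. So it suffices to show that at a point $A$ of rank exactly $s < r$ the relevant tangent space is strictly larger than $r(n+m-r)$, hence the point is singular, and conversely a rank-$r$ point is smooth by the above.

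First I would reduce to a normal form: by left and right multiplication by invertible matrices (which are isomorphisms of $\Mr$ onto itself, hence preserve singularity), assume $A = \begin{bmatrix} I_s & 0 \\ 0 & 0\end{bmatrix}$ with $s = \operatorname{rank}(A) < r$. Near such an $A$, write a perturbation in block form $\begin{bmatrix} B & C \\ D & E\end{bmatrix}$ with $B$ close to $I_s$, hence invertible; the rank-$\le r$ condition is then equivalent to $\operatorname{rank}(E - DB^{-1}C) \le r - s$, via the Schur complement. Since $r - s \ge 1$, the set of $E$ with $\operatorname{rank}(E - DB^{-1}C)\le r-s$ contains a neighborhood of $0$ in an affine-linear slice precisely when $r - s$ is at least $\min$ of the block sizes — but more to the point, I would directly exhibit tangent vectors: any direction of the form $\begin{bmatrix} 0 & 0 \\ 0 & E\end{bmatrix}$ with $\operatorname{rank}(E)\le r-s$ lies in $\Mr$ through $A$, and the tangent cone at $A$ therefore contains $\{UX^{\top}+YV^{\top}\} \oplus \mathcal{M}_{\le r-s}^{\,(n-s)\times(m-s)}$-type directions. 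Counting, one gets that the tangent cone has dimension strictly exceeding $r(n+m-r) = \dim \Mr$ whenever $s<r$, so $A$ cannot be a smooth point.

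For the converse direction — a rank-$r$ point is nonsingular — I would either cite that $\mathcal{M}_r$ is a smooth manifold of dimension $r(n+m-r)$ (the standard local parametrization via the Schur complement with $r-s=0$ shows $E = DB^{-1}C$ is determined, giving an explicit chart), or invoke the cited \cite[Theorem 10.3.3]{lakshmibai2015grassmannian} result directly. The main obstacle is making the dimension-of-the-tangent-cone argument rigorous in the real-algebraic setting, where "singular point" should be read as "point at which the variety is not locally a smooth manifold of the expected dimension"; the cleanest route is probably to avoid tangent cones entirely and argue as follows: if $A$ had rank $s<r$ and $\Mr$ were a manifold near $A$, its dimension there would be $r(n+m-r)$ (by irreducibility and the fact that the rank-$r$ locus is dense), but the local analysis above produces a submanifold of $\Mr$ through $A$ of strictly larger dimension (e.g. fix the rank-$(r-s)$ Schur complement block to range over its own manifold while letting $C, D$ vary freely), a contradiction. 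I would organize the write-up around this contradiction, with the Schur-complement normal form doing all the real work.
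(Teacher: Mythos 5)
The paper does not supply a proof here: the lemma is quoted verbatim as a classical fact from \cite[Theorem 10.3.3]{lakshmibai2015grassmannian}, so any genuine proof is already more than the paper gives. That said, your argument contains a dimension-counting error at its key step, and as written the conclusion does not follow.

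The issue is in the claim that the tangent cone at a rank-$s$ point (with $s<r$) has dimension \emph{strictly exceeding} $r(n+m-r)$, and likewise that fixing the Schur complement to range over the rank-$(r-s)$ stratum while letting $B,C,D$ vary freely produces a submanifold of $\Mr$ through $A$ of strictly larger dimension. Count that submanifold: $B$ gives $s^2$ parameters, $C$ gives $s(m-s)$, $D$ gives $(n-s)s$, and the rank-$(r-s)$ locus of $(n-s)\times(m-s)$ matrices has dimension $(r-s)\bigl((n-s)+(m-s)-(r-s)\bigr)$. Summing and simplifying,
\[
s(n+m-s) + (r-s)(n+m-r-s) = r(n+m)-r^2 = r(n+m-r),
\]
which is exactly $\dim\Mr$, not more. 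This is no accident: the tangent cone of an irreducible variety has dimension equal to the local dimension of the variety at \emph{every} point, smooth or not, so no dimension count on the tangent cone can detect singularity. Your parenthetical identification of the Zariski tangent space with the tangent cone is the source of the confusion; over $\mathbb{R}$ as over $\mathbb{C}$ these are distinct objects, and it is the Zariski tangent space whose excess dimension characterizes singular points.

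The Schur-complement normal form you set up is the right tool, but it should be used differently. The quick route is to observe that the tangent cone you computed, namely $\{(B,C,D,E)\,:\,\operatorname{rank}(E)\le r-s\}$, is \emph{not a linear subspace} when $0<r-s<\min(n-s,m-s)$ (which holds whenever $s<r<\min(n,m)$), since two matrices $E_1,E_2$ of rank $\le r-s$ can have a sum of larger rank; at a smooth point the tangent cone would coincide with the tangent space and be linear, so $A$ is singular. Alternatively, and perhaps most directly: the defining ideal of $\Mr$ is generated by the $(r+1)\times(r+1)$ minors, whose partial derivatives are (up to sign) $r\times r$ minors; these all vanish at a matrix of rank $<r$, so the Jacobian of the defining equations is identically zero there, the Zariski tangent space is all of $\mathbb{R}^{n\times m}$ of dimension $nm>r(n+m-r)$, and $A$ is singular. (This second argument does lean on the nontrivial commutative-algebra fact that the determinantal ideal is prime and generated by those minors, but it is the standard clean proof.) Your treatment of the rank-$r$ case via the explicit chart $E=DB^{-1}C$ is fine and establishes smoothness there.
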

By definition, the dimension of a variety $X$ is equal to the dimension of the manifold $X \setminus X_{sing}$  where $X_{sing}$ is the set of all singular points of $X$ \cite{griffiths2014principles}. In the case of $\Mr$ we find that 
$$\dim \Mr = \dim \mathcal{M}_{=r},$$
where $$\mathcal{M}_{=r} =\{ A \in \mathbb{R}^{n \times m}: \mathrm{rank}(A) = r \},$$ is known to be of dimension $(n+m)r - r^2$ (e.g. \cite[Proposition 2.1]{vandereycken2013low}).

Now we return to the main topic of the paper. 
\\
Let $Gr(m-r,m)$ be the Grassmann manifold:
$$Gr(m-r,m) = \mathbb{R}_{*}^{m,m-r} / GL_{m-r},$$
where $\mathbb{R}_{*}^{m,m-r}$ is the noncompact Stiefel manifold
$$\mathbb{R}_{*}^{m,m-r}  = \lbrace Y \in \mathbb{R}^{m \times (m-r)}: Y \text{ full rank} \rbrace, $$
and $GL_{m-r}$ is the group of invertible $m-r \times m-r$ matrices.
\\
It is known \cite{lee-introman-2001} that $$\dim Gr(m-r,m) = r(m-r).$$

We propose the following desingularization for $\wMr$:
\begin{equation}
\label{notsing:grassm}
\wMr = \lbrace (A,Y) \in \mathbb{R}^{n \times m} \times Gr(m-r,m)  : AY=0 \rbrace ,
\end{equation}
and prove the following theorem.
\begin{theorem}
$\wMr$ as defined by \cref{notsing:grassm} is a smooth manifold of dimension $(n+m)r - r^2$.
\end{theorem}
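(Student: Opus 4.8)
The plan is to realize $\wMr$ as the total space of a vector bundle over $Gr(m-r,m)$ and read off both smoothness and dimension from that description. First I would observe that the defining equation $AY=0$ depends only on the column span $\mathcal{Y}=\operatorname{im} Y\in Gr(m-r,m)$ and not on the chosen representative $Y\in\mathbb{R}_*^{m,m-r}$: replacing $Y$ by $YG$ with $G\in GL_{m-r}$ does not change the set of $A$ with $AY=0$. Thus $\wMr$ is well defined as a subset of $\mathbb{R}^{n\times m}\times Gr(m-r,m)$, and the projection $q\colon\wMr\to Gr(m-r,m)$, $(A,Y)\mapsto\mathcal{Y}$, has as fibre over $\mathcal{Y}$ the linear space $\{A\in\mathbb{R}^{n\times m}: A|_{\mathcal{Y}}=0\}\cong\operatorname{Hom}(\mathbb{R}^m/\mathcal{Y},\mathbb{R}^n)$, of dimension $nr$. (Incidentally, any such $A$ satisfies $\operatorname{rank}A\le r$ because $\ker A\supseteq\mathcal{Y}$ is $(m-r)$-dimensional, which is also why $\pi$ maps onto all of $\Mr$.)

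Next I would build explicit charts by trivializing $q$ over the standard affine atlas of the Grassmannian. Fix a coordinate splitting $\mathbb{R}^m=\mathbb{R}^{m-r}\oplus\mathbb{R}^{r}$ (and analogously for the other charts, indexed by which $r$ coordinates play the role of the second factor). On the chart $U$ of subspaces complementary to the $\mathbb{R}^r$-factor, each $\mathcal{Y}$ is the graph of a unique linear map, i.e. spanned by $Y=\begin{bmatrix} I_{m-r}\\ C\end{bmatrix}$ for a unique $C\in\mathbb{R}^{r\times(m-r)}$, and $C\mapsto\mathcal{Y}$ is the usual diffeomorphism $\mathbb{R}^{r(m-r)}\xrightarrow{\ \sim\ }U$. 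Splitting $A=[\,A_1\mid A_2\,]$ accordingly with $A_1\in\mathbb{R}^{n\times(m-r)}$ and $A_2\in\mathbb{R}^{n\times r}$, the equation $AY=A_1+A_2C=0$ is equivalent to $A_1=-A_2C$. Hence
$$ q^{-1}(U)\ \xrightarrow{\ \sim\ }\ U\times\mathbb{R}^{n\times r},\qquad (A,Y)\longmapsto(\mathcal{Y},A_2), $$
with polynomial inverse $(\mathcal{Y},A_2)\mapsto\bigl([\,-A_2C\mid A_2\,],\,\operatorname{im}\!\begin{bmatrix}I_{m-r}\\ C\end{bmatrix}\bigr)$; composing with the coordinate chart on $U$ turns $q^{-1}(U)$ into an open piece of $\mathbb{R}^{r(m-r)+nr}$.

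Then I would check compatibility of these charts. On the overlap of two of them the Grassmannian transition map is smooth \cite{lee-introman-2001}, and passing between two representatives $Y,Y'$ of the same $\mathcal{Y}$ multiplies the fibre coordinate $A_2$ on the right by an invertible $r\times r$ matrix that varies smoothly (indeed rationally) with the base point; hence the fibrewise-linear transition maps are smooth, and $\wMr$ is a smooth manifold — in fact $q$ is a smooth rank-$nr$ vector bundle, namely $\operatorname{Hom}$ of the tautological quotient bundle of $Gr(m-r,m)$ into the trivial bundle $\underline{\mathbb{R}^n}$. Finally, $\dim\wMr=\dim Gr(m-r,m)+nr=r(m-r)+nr=(n+m)r-r^2$, which is the asserted dimension.

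The ingredients are all elementary linear algebra; the one step needing genuine care is the last one — verifying that the two parametrizations of a fibre $\{A:AY=0\}$ coming from two different affine charts of $Gr(m-r,m)$ agree on the overlap, i.e. differ by an invertible linear map depending smoothly on the base point. Once $q$ is recognized as a vector bundle this is automatic, and one could alternatively bypass the chart bookkeeping entirely by quoting the general fact that the total space of a vector bundle over a smooth manifold is a smooth manifold whose dimension is the base dimension plus the fibre dimension.
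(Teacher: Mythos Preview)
Your proof is correct, and at the technical core it coincides with the paper's: both localize to the standard affine charts of the Grassmannian, write $Y=\begin{bmatrix}I_{m-r}\\ C\end{bmatrix}$ and split $A=[A_1\mid A_2]$, so that the constraint becomes $A_1+A_2C=0$. From that point the two arguments diverge in packaging. The paper checks that the Jacobian of this constraint map (in the variables $A_1,A_2,C$) has full row rank thanks to the leading block $I_{n(m-r)}$, invokes the implicit function theorem, and reads off the dimension as $nm+r(m-r)-n(m-r)$. You instead \emph{solve} the constraint, $A_1=-A_2C$, to produce an explicit chart $(C,A_2)\in\mathbb{R}^{r(m-r)}\times\mathbb{R}^{n\times r}$, and then interpret the chart compatibility as saying that the projection $\wMr\to Gr(m-r,m)$ is a rank-$nr$ vector bundle (indeed $\operatorname{Hom}$ of the tautological quotient into the trivial bundle $\underline{\mathbb{R}^n}$), from which smoothness and the dimension $r(m-r)+nr$ are immediate. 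Your framing is more structural and makes the fibre description $\{A:A\mathcal{Y}=0\}\cong\operatorname{Hom}(\mathbb{R}^m/\mathcal{Y},\mathbb{R}^n)$ transparent; the paper's framing avoids the transition-map bookkeeping by checking a single rank condition per chart. Either way the local linear algebra is identical.
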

\begin{proof}
Let $U_{\alpha}$ be a local chart of $Gr(m-r,m)$. 
To prove the theorem it suffices to show that $\wMr \cap( \mathbb{R}^{n \times m} \times U_{\alpha})$ is a smooth manifold for all $\alpha$. Without loss of generality let us assume that the coordinates of $Y \in Gr(m-r,m) \cap U_\alpha$ are given by 
$$
Y = \begin{bmatrix}
I_{m-r} \\
Y_{\alpha} \\
\end{bmatrix},
$$
where
$$
Y_{\alpha} = 
\begin{bmatrix}
\alpha_{1,1} & \alpha_{1,2} & \hdots &\alpha_{1,m-r} \\
\alpha_{2,1} & \alpha_{2,2} & \hdots &\alpha_{2,m-r} \\
\hdots & \hdots & \hdots & \hdots \\
\alpha_{r,1} & \alpha_{2,2} & \hdots &\alpha_{r,m-r} \\
\end{bmatrix}.
$$
In this chart equation \cref{notsing:grassm} reads 
\begin{equation} \label{eq:local-grassm}
A \begin{bmatrix}
I_{m-r} \\
Y_{\alpha} \\
\end{bmatrix} = 0.
\end{equation}
Splitting $A$ as 
$$A = 
\begin{bmatrix}
A_1 & A_2
\end{bmatrix},
$$
where $$A_1 \in \mathbb{R}^{n \times (m-r)}, \quad A_2 \in \mathbb{R}^{n \times r},$$ and by using properties of the Kronecker product $\otimes$ we obtain that the Jacobian matrix of \cref{eq:local-grassm} is equal to
$$ 
\begin{bmatrix}
I_{n(m-r)} & Y_{\alpha}^{\top} \otimes I_n & I_{m-r} \otimes A_2 \\
\end{bmatrix},
$$
which is clearly of full rank, since it contains identity matrix.
To conclude the proof we note that 
$$\dim \wMr = \underbrace{nm + (m-r)r}_{\text{number of variables}} -\underbrace{n(m-r)}_{\text{number of equations}} = (n+m)r - r^2,$$
as desired.
\end{proof}
The use of $\wMr$ is justified by the simple lemma
\begin{lemma}
The following statements hold:
\begin{itemize}
\item If $(A, Y) \in \wMr$ then $A \in \Mr$,
\item If $A \in \Mr$ then there exists $Y$ such that $(A,Y) \in \wMr$.
\end{itemize}
\end{lemma}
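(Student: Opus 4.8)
The plan is to read the defining equation $AY=0$ geometrically. A point of $Gr(m-r,m)$ is an $(m-r)$-dimensional subspace $W\subseteq\mathbb{R}^m$; a representative $Y\in\mathbb{R}^{m\times(m-r)}_*$ is just a choice of basis of $W$ arranged in columns. So the first step is a well-definedness check: if $Y'=YG$ with $G\in GL_{m-r}$, then $AY'=AYG$, and since $G$ is invertible this vanishes if and only if $AY$ does. Hence the condition $AY=0$ depends only on the subspace $W=\operatorname{colspan}(Y)$, and it says exactly that $W\subseteq\ker A$. With this reformulation both bullets become one-line consequences of rank–nullity.

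For the first bullet, take $(A,Y)\in\wMr$. Since $Y$ has full column rank, $\dim\operatorname{colspan}(Y)=m-r$, and by the observation above $\operatorname{colspan}(Y)\subseteq\ker A$, so $\dim\ker A\ge m-r$. Rank–nullity then gives $\rank A=m-\dim\ker A\le m-(m-r)=r$, i.e. $A\in\Mr$.

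For the second bullet, take $A\in\Mr$, so $\rank A\le r$ and therefore $\dim\ker A=m-\rank A\ge m-r$. Pick any $(m-r)$-dimensional subspace $W\subseteq\ker A$ (possible precisely because $\dim\ker A\ge m-r$), choose a full-rank $Y\in\mathbb{R}^{m\times(m-r)}$ whose columns span $W$, so that $[Y]\in Gr(m-r,m)$, and note $AY=0$ since every column of $Y$ lies in $\ker A$. Thus $(A,Y)\in\wMr$.

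I do not expect any real obstacle here: the only points needing care are the representative-independence of the equation $AY=0$ on $Gr(m-r,m)$ and the existence of an $(m-r)$-dimensional subspace of $\ker A$, which is guaranteed by $\rank A\le r$. I would also remark on the trivial edge case $r\ge m$, where $\Mr=\mathbb{R}^{n\times m}$ and the statement is vacuous, so that we may assume $r<m$ throughout.
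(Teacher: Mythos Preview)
Your proof is correct and follows the same idea as the paper's: both rest on reading $AY=0$ as the statement that the $(m-r)$-dimensional column span of $Y$ lies in $\ker A$, and then invoking rank--nullity. The paper's proof is a one-liner (``these statements obviously follow from the equation $AY=0$, which implies that the dimension of the nullspace of $A$ is at least $m-r$''), while you spell out the representative-independence on the Grassmannian and the edge case $r\ge m$, which is fine but not required here.
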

\begin{proof}
These statements obviously follow from the equation
$$AY = 0,$$
which implies that the dimension of the nullspace of $A$ is at least $m-r$.
\end{proof}
 We would like to construct Newton method on the manifold $\wMr$. In order to work with quotient manifolds such as $Gr(m-r,m)$ the conventional approach is to use the total space of the quotient. The tangent space is then dealt with using the concept of \emph{horizontal space} (sometimes this is referred to as \emph{gauge condition}) which is isomorphic to the tangent space of the quotient manifold. This approach is explained in great detail in \cite{absil2009optimization}. Although we will not go into the details of these concepts, we will apply them to $\wMr$ in the next section.
\subsection{Tangent space of $\wMr$}
\label{sec:ts-desing}
For our analysis, it is more convenient to employ the following representation of the Grassmanian:
\begin{equation}
\label{eq:grassm-stief}
Gr(m-r,m) = St(m-r,m)/O_{m-r},
\end{equation}
where $St(m-r,m)$ is the orthogonal Stiefel manifold
$$St(m-r,m) = \lbrace Y \in \mathbb{R}^{m,m-r}_{*}: Y^{\top} Y = I_{m-r} \rbrace,$$
and $O_{m-r}$ is the orthogonal group. 
}

Let $\pi$ be the quotient map \cref{eq:grassm-stief} and $\text{id} \times \pi$ is the obvious map
$$\mathbb{R}^{n \times m} \times St(m-r,m) \to \mathbb{R}^{n \times m} \times Gr(m-r,m).$$ It is easy to see that $\wMr^{\text{tot}} \coloneqq (\text{id} \times \pi)^{-1} (\wMr)$ is the following manifold:
\begin{equation}
\label{eq:defining}
\wMr^{\text{tot}} = \lbrace (A,Y) \in \mathbb{R}^{n \times m} \times \mathbb{R}^{m, m-r}_{*} : A Y = 0,\quad  Y^{\top} Y = I_{m-r} \rbrace. 
\end{equation}

Let us now compute the horizontal distribution on $\wMr^{\text{tot}}$. As described in \cite[Example 3.6.4]{absil2009optimization} in the case of the projection $$\pi: \mathbb{R}_{*}^{m, m-r} \to Gr(m-r,m),$$
$$ Y \to \text{span} (Y),$$
the horizontal space at $Y$ is defined as the following subspace of $T_Y \mathbb{R}_{*}^{m, m-r}$:
\begin{equation}\label{eq:gauge}
\lbrace \delta Y \in T_Y \mathbb{R}_{*}^{m,m-r} : (\delta Y)^{\top} Y = 0 \rbrace .
\end{equation}
It immediately follows that in the case \cref{eq:defining} the horizontal space at $(A,Y)$ is equal to
$$\mathcal{H}(A,Y) = T_{(A,Y)}(\wMr^{\text{tot}}) \cap \mathcal{H}_{\text{Gr}}(A,Y),$$
where $\mathcal{H}_{\text{Gr}}(A,Y)$ is similarly to \cref{eq:gauge} defined as:
\begin{equation}\label{eq:true-gauge}
\mathcal{H}_{\text{Gr}}(A,Y) = \lbrace (\delta A, \delta Y) \in T_{(A,Y)}(\mathbb{R}^{n \times m}\times \mathbb{R}_{*}^{m, m-r}) :  (\delta Y)^{\top} Y = 0 \rbrace.
\end{equation}
Note that the dimension of $\mathcal{H}$ is equal to the dimension of $\wMr$ since it is, by construction, isomorphic to the $T\wMr$.
We now proceed to one of the main results of the paper
\begin{theorem}
The orthogonal projection on $\mathcal{H}(A,Y)$ is Lipschitz continuous with respect to $(A,Y)$ and its Lipschitz constant is no greater than $2 (\sqrt{n} + \sqrt{m-r})$ in the Frobenius norm.
\end{theorem}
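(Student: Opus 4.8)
The plan is to reproduce, in the general setting, the computation carried out for $2\times 2$ matrices in \cref{notsing:2x2sec}: realize $\mathcal{H}(A,Y)$ as the kernel of an explicit matrix $N(A,Y)$, bound $\|N^{\dagger}\|_2$ uniformly from above, control $\|N(A,Y)-N(A',Y')\|_F$, and then invoke the perturbation estimate \cref{notsing:projectorbound}. To this end, linearize the two equations $AY=0$, $Y^{\top}Y=I_{m-r}$ defining $\wMr^{\text{tot}}$ in \cref{eq:defining} at $(A,Y)$: one gets $\delta A\,Y + A\,\delta Y = 0$ and $Y^{\top}\delta Y + (\delta Y)^{\top}Y = 0$. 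Intersecting with the gauge condition $(\delta Y)^{\top}Y=0$ of \cref{eq:true-gauge} — equivalently $Y^{\top}\delta Y = 0$, which makes the second linearized equation automatic — shows every element of $\mathcal{H}(A,Y)$ satisfies $\delta A\,Y + A\,\delta Y = 0$ and $Y^{\top}\delta Y = 0$. Vectorizing via $\mathrm{vec}(\delta A\,Y)=(Y^{\top}\otimes I_n)\,\mathrm{vec}(\delta A)$, $\mathrm{vec}(A\,\delta Y)=(I_{m-r}\otimes A)\,\mathrm{vec}(\delta Y)$ and $\mathrm{vec}(Y^{\top}\delta Y)=(I_{m-r}\otimes Y^{\top})\,\mathrm{vec}(\delta Y)$, this says $\mathcal{H}(A,Y)\subseteq\ker N(A,Y)$ inside $\mathbb{R}^{nm}\times\mathbb{R}^{m(m-r)}$, where
$$
N(A,Y)=\begin{bmatrix} Y^{\top}\otimes I_n & I_{m-r}\otimes A\\ 0 & I_{m-r}\otimes Y^{\top}\end{bmatrix}.
$$
Writing the $Y$-constraint as $Y^{\top}\delta Y=0$ rather than $(\delta Y)^{\top}Y=0$ keeps the Kronecker algebra clean and avoids the commutation operator $T_{m,m-r}$.

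Next I would compute $N(A,Y)N(A,Y)^{\top}$, exactly as in \cref{notsing:n2sing}: using $AY=0$ the off-diagonal blocks vanish and using $Y^{\top}Y=I_{m-r}$ the diagonal blocks simplify, yielding the block-diagonal, positive-definite matrix $\mathrm{diag}\bigl(I_{m-r}\otimes(I_n+AA^{\top}),\,I_{(m-r)^2}\bigr)$, whose smallest eigenvalue is $\ge 1$. Hence $N(A,Y)$ has full row rank at every point of $\wMr^{\text{tot}}$, so $\dim\ker N(A,Y)=nm+m(m-r)-(m-r)(n+m-r)=(n+m)r-r^2=\dim\mathcal{H}(A,Y)$, which forces $\mathcal{H}(A,Y)=\ker N(A,Y)$; moreover $N(A,Y)^{\dagger}=N(A,Y)^{\top}\bigl(N(A,Y)N(A,Y)^{\top}\bigr)^{-1}$ and $\sigma_{\min}(N(A,Y))\ge 1$, i.e. $\|N(A,Y)^{\dagger}\|_2\le 1$. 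Since the orthogonal projector onto $\ker N$ is $I-N^{\dagger}N$, the quantity to bound is $\|P_{\mathcal{H}(A,Y)}-P_{\mathcal{H}(A',Y')}\|_F=\|N(A,Y)^{\dagger}N(A,Y)-N(A',Y')^{\dagger}N(A',Y')\|_F$, and \cref{notsing:projectorbound} applies since both matrices have full row rank.

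Finally, $N(A,Y)-N(A',Y')$ has the same block structure with $Y-Y'$, $A-A'$ in place of $Y$, $A$, so the identity $\|X\otimes I_k\|_F=\sqrt{k}\,\|X\|_F$ gives $\|N(A,Y)-N(A',Y')\|_F^2=(n+m-r)\|Y-Y'\|_F^2+(m-r)\|A-A'\|_F^2$. Using $\sqrt{a+b}\le\sqrt a+\sqrt b$ and $\sqrt{n+m-r}\le\sqrt n+\sqrt{m-r}$, this is at most $(\sqrt n+\sqrt{m-r})\bigl(\|A-A'\|_F+\|Y-Y'\|_F\bigr)$; inserting this and $\|N^{\dagger}\|_2\le 1$ into \cref{notsing:projectorbound} yields $\|P_{\mathcal{H}(A,Y)}-P_{\mathcal{H}(A',Y')}\|_F\le 2(\sqrt n+\sqrt{m-r})\bigl(\|A-A'\|_F+\|Y-Y'\|_F\bigr)$, which is the claim.

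The step I expect to be the main obstacle is the first: correctly identifying $\mathcal{H}(A,Y)$ as $\ker N$. One must keep track that $\mathcal{H}$ sits inside the ambient product space $\mathbb{R}^{n\times m}\times\mathbb{R}^{m, m-r}$, verify that the linearization of $Y^{\top}Y=I_{m-r}$ contributes nothing once the gauge condition is imposed (so that the codimension is $(m-r)(n+m-r)$ and the dimension count closes), and — crucially for the downstream estimates to remain routine — choose the convenient form $Y^{\top}\delta Y=0$ of the gauge constraint. After that the $NN^{\top}$ computation and the Kronecker-norm bookkeeping are short, and the constant drops out of \cref{notsing:projectorbound} immediately.
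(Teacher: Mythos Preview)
Your proposal is correct and follows essentially the same approach as the paper: the same matrix $N(A,Y)$, the same block-diagonal computation of $NN^{\top}$ using $AY=0$ and $Y^{\top}Y=I$, the same bound $\sigma_{\min}(N)\ge 1$, and the same Kronecker-norm estimate on $\|N-N'\|_F$ plugged into \cref{notsing:projectorbound}. Your explicit dimension count to upgrade the containment $\mathcal{H}(A,Y)\subseteq\ker N(A,Y)$ to an equality is in fact slightly more careful than the paper, which simply asserts the identification.
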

\begin{proof}
In order to prove the theorem, first we need to find the equations of $\mathcal{H}(A,Y)$.  Recall the defining equations of $\wMr^{\text{tot}}$ \cref{eq:defining} and that for a given $p=(A, Y)$ the tangent space is the nullspace of the gradient of the constraints. By taking into account the gauge condition \cref{eq:true-gauge}
we find that 
    $$\mathcal{H}(A,Y) = \{v: N(p) v = 0 \},$$
    where the matrix $N(p)$ 
   has the following block structure:
   \begin{equation}\label{notsing:nullspace2}
      N(p) = \begin{bmatrix} Y^{\top} \otimes I_n & I_{m-r} \otimes A \\
       0 & I_{m-r} \otimes Y^{\top}
   \end{bmatrix}.
   \end{equation}
   For simplicity of notation we will omit $p$ in $N(p)$.
   The projection onto the horizontal space of a given vector $z$ is given by the following formula 
   \begin{equation}\label{notsing:tangent2}
   v = (I - N^{\top} (N N^{\top})^{-1} N) z = P_{N} z,
   \end{equation}
   where 
   $$P_{N} = (I - N^{\dagger} N ),$$
   is the orthogonal projector onto the row range of $N$.
Using exactly the same idea as in previous section we estimate
   $\sigma_{\min}(N)$ from below.
  Consider the Gram matrix
  \begin{equation*}
  \begin{split}
  Z = N N^{\top} = \begin{bmatrix} Y^{\top} \otimes I_{n} & I_{m-r} \otimes A \\ 
  0   & I_{m-r} \otimes Y^{\top}
                  \end{bmatrix}
                  \begin{bmatrix}
                      Y  \otimes I_n & 0 \\
                      I_{m-r} \otimes A^{\top}& I_{m-r} \otimes Y
                  \end{bmatrix} = \\
                  =\begin{bmatrix} Y^{\top} Y \otimes I_n +   I_{m-r} \otimes A A^{\top} &  I_{m-r} \otimes AY \\
                  I_{m-r} \otimes Y^{\top} A^{\top} &  I_{m-r} \otimes Y^{\top} Y
                  \end{bmatrix}.
  \end{split}
  \end{equation*}
  Now we recall that for each point at the manifold $\wMr^{\text{tot}}$ \cref{eq:defining} holds, therefore 
  \begin{equation}\label{notsing:zmatrix}
      Z = \begin{bmatrix} I +  I_{m-r} \otimes AA^{\top} & 0 \\
  0 &  I \end{bmatrix}.
  \end{equation}
  It is obvious that $\sigma_{\min}(Z) \geq 1$ since it has the form $I + DD^{\top}$.
  Finally, $\sigma^2_{\min}(N) = \sigma_{\min}(Z) \geq 1$, therefore
  \begin{equation}\label{notsing:sigmabound}
      \sigma_{\min}(N) = \sigma_{\min}(N^{\top}) \geq 1, \quad \Vert (N^{\top})^{\dagger} \Vert_2 \leq 1.
  \end{equation}
  Putting \cref{notsing:sigmabound} into \cref{notsing:projectorbound} we get
  $$
    \Vert P_N - P_{N'} \Vert_F \leq  2 \Vert N - N' \Vert_F,
  $$
   with $N = N(A,Y), N' = N(A', Y')$.
  Finally, we need to estimate how $N$ changes under the change of $A$ and $Y$.
 We have
  $$
     N - N' = \begin{bmatrix} (Y - Y') \otimes I_n &  I_{m-r} \otimes (A - (A')) \\
0 & I_{m-r} \otimes (Y^{\top} - (Y')^{\top}) 
                    \end{bmatrix},
  $$
  therefore
  $$
     \Vert N - N' \Vert_F \leq (\sqrt{n}+\sqrt{m-r}) \Vert Y - Y' \Vert_F + \sqrt{m-r} \Vert A - A' \Vert_F. 
  $$
Thus 
\begin{dmath}
d_{Gr}(\mathcal{H}(A',Y'), \mathcal{H}(A,Y)) = \|P_N - P_{N'}\|_F \leq 2 \|N-N'\|_F \leq  2 (\sqrt{n}+\sqrt{m-r}) (\Vert Y - Y' \Vert_F + \Vert A - A' \Vert_F).
\end{dmath}
\ 
\end{proof}

For small $r$ 
$$(m+n)r - r^2 \ll nm,$$ so to fully utilize the properties of $\wMr$ in computations we first have to find an explicit basis in the horizontal space. This will be done in the next section.

\subsection{Parametrization of the tangent space} \label{notsing:tspar}
To work with low rank matrices it is very convenient to represent them using the truncated singular value decomposition (SVD). Namely for $A \in \Mr$ we have 
$$A = U S V^{\top},$$ 
with $U$ and $V$ having $r$ orthonormal columns and $S$ being a diagonal matrix. Using this notation we find that the following result holds: 
\begin{theorem}\label{notsing:theoremQ}
The orthogonal basis in the kernel of $N$ from \cref{notsing:nullspace2} is given by columns of the following matrix $Q$
$$Q = \begin{bmatrix} V \otimes I_n  & - Y \otimes (U S_1) \\
0 & I_{m-r} \otimes (V S_2)\end{bmatrix},$$
where 

$S_1$ and $S_2$ are diagonal matrices defined as

$$
S_1 =S(S^2 + I_r)^{-\frac{1}{2}}, \quad S_2 = (S^2 + I_r)^{-\frac{1}{2}}
$$

\end{theorem}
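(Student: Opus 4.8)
The plan is to verify directly that the columns of $Q$ are orthonormal and are annihilated by $N$, and then to match dimensions with $\mathcal{H}(A,Y) = \ker N$.

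First I would record the facts about the decomposition $A = USV^{\top}$ that the computation needs. Since $(A,Y) \in \wMr^{\text{tot}}$ we have $AY = 0$ and $Y^{\top}Y = I_{m-r}$; plugging $A = USV^{\top}$ into $AY = 0$ and multiplying by $S^{-1}U^{\top}$ on the left yields $V^{\top}Y = 0$ (when $\rank A < r$ this still holds once the truncated SVD is completed so that the padding right singular vectors are chosen inside $(\mathrm{range}\, Y)^{\perp}$, which is possible because $\mathrm{range}\, Y \subseteq \ker A$ has the complementary dimension). Equivalently, the columns of $V$ and of $Y$ together form an orthonormal basis of $\mathbb{R}^{m}$. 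I will also use $U^{\top}U = V^{\top}V = I_{r}$ and $AV = USV^{\top}V = US$, together with the two diagonal identities that follow at once from the definitions of $S_{1}$ and $S_{2}$, namely
\[
S S_{2} = S_{1}, \qquad S_{1}^{2} + S_{2}^{2} = (S^{2}+I_{r})(S^{2}+I_{r})^{-1} = I_{r};
\]
in fact these two relations pin $S_{1}$ and $S_{2}$ down uniquely among diagonal matrices, which is precisely where the stated formulas come from.

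Next I would verify $NQ = 0$ by multiplying the $2 \times 2$ block matrices $N$ from \cref{notsing:nullspace2} and $Q$, using the mixed-product rule $(P \otimes R)(P' \otimes R') = (PP') \otimes (RR')$. The $(1,1)$ block gives $Y^{\top}V \otimes I_{n} = 0$; the $(1,2)$ block gives $-(Y^{\top}Y) \otimes (US_{1}) + I_{m-r} \otimes (AVS_{2}) = I_{m-r} \otimes \bigl(U(SS_{2}-S_{1})\bigr) = 0$; the $(2,1)$ block is trivially $0$; and the $(2,2)$ block gives $I_{m-r} \otimes (Y^{\top}VS_{2}) = 0$. Hence every column of $Q$ lies in $\ker N = \mathcal{H}(A,Y)$. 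Then I would check $Q^{\top}Q = I$ the same way: the $(1,1)$ block is $(V^{\top}V) \otimes I_{n} = I_{nr}$, the off-diagonal blocks are multiples of $V^{\top}Y = 0$, and the $(2,2)$ block is $(Y^{\top}Y) \otimes (S_{1}U^{\top}US_{1}) + I_{m-r} \otimes (S_{2}V^{\top}VS_{2}) = I_{m-r} \otimes (S_{1}^{2}+S_{2}^{2}) = I_{(m-r)r}$. So the columns of $Q$ are orthonormal, hence linearly independent.

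Finally I would count columns: $Q$ has $nr$ columns in its left block and $(m-r)r$ columns in its right block, for a total of $(n+m)r - r^{2}$, which equals $\dim \mathcal{H}(A,Y) = \dim \wMr$. An orthonormal system of $(n+m)r - r^{2}$ vectors lying inside the $\bigl((n+m)r - r^{2}\bigr)$-dimensional space $\ker N$ must span it, so the columns of $Q$ form an orthonormal basis of $\ker N$, as claimed. I expect the only genuinely subtle point to be the very first step — securing $V^{\top}Y = 0$ when $A$ is rank-deficient, which forces one to choose the completion of the truncated SVD compatibly with $\mathrm{range}\, Y$; everything after that is routine Kronecker-product bookkeeping.
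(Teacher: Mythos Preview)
Your proof is correct and follows exactly the paper's approach: the paper's proof simply states that it suffices to verify $Q^{\top}Q = I$ and $NQ = 0$ by direct multiplication and then matches the column count $nr + (m-r)r$ with $\dim \mathcal{H}(A,Y)$. You have supplied the block-by-block details the paper omits, and you also flag the one point the paper glosses over, namely arranging $V^{\top}Y = 0$ in the rank-deficient case by completing the truncated SVD consistently with $\mathrm{range}\,Y$.
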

\begin{proof}
It suffices to verify that $Q^{\top} Q = I$ and $NQ = 0$ which is performed by direct multiplication.
The number of columns in $Q$ is $nr + (m-r) r$ which is exactly the dimension of the $\mathcal{H}(A,Y)$.
\end{proof}
Now we will use smoothness of $\wMr$ to develop an optimization algorithm over $\Mr$. 
The idea of using kernel of a matrix in optimization problems has appeared before \cite{markovsky2011low, markovsky2013structured}. Algorithm constructed there is a variable--projection–-like method with $O(m^3)$ per iteration complexity, where $m$ is number of columns in the matrix. We explain this approach in more detail in \cref{sec:related}.
\section{Newton method} \label{notsing:secnewton}
\subsection{Basic Newton method} \label{notsing:basicnewton}
Consider the optimization problem 
$$F(A) \rightarrow \min, \quad \mbox{s.t. } A \in \Mr,$$
where $F$ is twice differentiable. Using the idea described in \cref{notsing:intro} this problem is equivalent to 
$$\widehat{F}(A, Y) \rightarrow \min, \quad \mbox{s.t. } (A, Y) \in \wMr,$$
and
$$\widehat{F}(A,Y) = F(A).$$
Following the approach described in e.g. \cite[Section 4.9]{absil2009optimization} we solve this problem by lifting it to the total space $\wMr^{\text{tot}}$ defined by \cref{eq:defining} with the additional condition that the search direction lies in the horizontal bundle $\mathcal{H}$, that is
$$\widetilde{F}(A,Y) \rightarrow \min, \quad \mbox{s.t. } (A, Y) \in \wMr^{\text{tot}}, $$
$$\widetilde{F}(A,Y) = F(A), $$
\begin{equation}\label{eq:small_gauge}
(\delta A, \delta Y) \in \mathcal{H}(A,Y).
\end{equation}

To solve it we will rewrite it using the Lagrange multipliers method, with the 
additional constraint \cref{eq:small_gauge}. Taking into account the defining equations of $\wMr^{\text{tot}}$ \cref{eq:defining} the Lagrangian for the constrained optimization problem reads
$$\mathcal{L}(A, Y, \Lambda, M) = F(A) + \langle AY, \Lambda \rangle + \frac{1}{2} \langle M, Y^{\top} Y - I \rangle,$$ 
where $\Lambda \in \mathbb{R}^{n \times m-r}$ and $M \in \mathbb{R}^{(m-r) \times (m-r)},$ $M^{\top}=M$ are the Lagrange multipliers.
We now find the first-order optimality conditions.
\subsection{First order optimality conditions}
By differentiating $\mathcal{L}$ we find the following equations
$$
\nabla F + \Lambda Y^{\top} = 0, \quad Y M + A^{\top} \Lambda = 0, \quad AY = 0, \quad Y^{\top} Y = I.
$$
Multiplying second equation by $Y^{\top}$ from the left and using equations $AY=0$ and $Y^{\top} Y = I$ we find that $M=0$.
Thus, the first-order optimality conditions reduce to 
\begin{equation}\label{notsing:firstorder}
\nabla F + \Lambda Y^{\top} = 0, \quad  A^{\top} \Lambda = 0, \quad AY = 0, \quad Y^{\top} Y = I.
\end{equation}
\subsection{Newton method and the reduced Hessian system}
Now we can write down the Newton method for the system \cref{notsing:firstorder}, 
which can be written in the saddle point form
\begin{equation}\label{notsing:newton1}
\begin{bmatrix} \widehat{G} & N^{\top} \\
N & 0 
\end{bmatrix}
\begin{bmatrix} \delta z \\
\delta \lambda
\end{bmatrix} 
= \begin{bmatrix} f\\
0
\end{bmatrix} ,
\end{equation}
and
$$f = -\mathrm{vec}(\nabla F + \Lambda Y^{\top}).$$
where we assumed that the initial point satisfies the constraints ($AY = 0, Y^{\top} Y = I_{m-r}$),
the vectors $\delta z$ and $\delta \lambda $ are 
$$
\delta z = \begin{bmatrix} \mathrm{vec}(\delta A) \\ \mathrm{vec}(\delta Y)
\end{bmatrix}, \quad \delta \lambda = 
\begin{bmatrix}
\mathrm{vec}(\delta \Lambda) \\
\mathrm{vec}(\delta M) 
\end{bmatrix},
$$
and the matrix $\widehat{G}$ in turn has a saddle-point structure:
$$
\widehat{G} = \begin{bmatrix}H & C \\
C^{\top} & 0 
\end{bmatrix},
$$
where $H = \nabla^2 F$ is the ordinary Hessian, and $C$ comes from differentiating the term $\Lambda Y^{\top}$ with respect to $Y$ and will be derived later in the text.
The constraints on the search direction $\delta z$ are written as 
$$
N \delta z = 0, 
$$
and 
$$
N = \begin{bmatrix} Y^{\top} \otimes I_n & I_{m-r} \otimes A \\
0 & I_{m-r} \otimes Y^{\top}
\end{bmatrix},
$$
which means that $\delta z$ is in the $\mathcal{H}(A,Y)$ as desired. In what follows our approach is similar to the null space methods described in \cite[Section 6]{benzi2005numerical}.  Using a parametrization via the matrix $Q$ defined in \cref{notsing:theoremQ} we obtain that
$\delta z=Q \delta w$.
\\
The first block row of system \cref{notsing:newton1} reads
$$\widehat{G} Q \delta w + N^{\top} \delta \lambda = f.$$ 
Multiplying by $Q^{\top}$ we can eliminate $\delta \lambda$, which leads to 
the reduced Hessian equation
\begin{equation} \label{reducedHess}
Q^{\top} \widehat{G} Q \delta w = Q^{\top} f.
\end{equation}
Note that $Q^{\top} \widehat{G} Q$ is a small $(n+m)r-r^2 \times (n+m)r-r^2$ matrix as claimed.
We now would like to simplify equation \cref{reducedHess}.
Using the transposition operator defined in \cref{remark}
we find that matrix $C$ is written as
$$C = (I_m \otimes \Lambda) T_{m,m-r}.$$
An important property of the matrix $C$ is that if $Q_{12} = -Y \otimes (US_1)$ is the (1, 2) block of the matrix $Q$, then 
$$Q_{12} C = 0, $$
if $$A^{\top} \Lambda = 0,$$
which is again verified by direct multiplication using the properties of the Kronecker product.
The direct evaluation of the product $$\widehat{G}^{loc} = Q^{\top} \widehat{G} Q,$$ (together with the property above) gives
\begin{equation}\label{notsing:Gdef}
\widehat{G}^{loc} = \begin{bmatrix} Q^{\top}_{11} H Q_{11} & Q^{\top}_{11} H Q_{12} + Q^{\top}_{11} C Q_{22} \\
Q^{\top}_{12} H Q_{11}+ Q^{\top}_{22} C^{\top} Q_{11} & Q^{\top}_{12} H Q_{12}
\end{bmatrix},
\end{equation}
and the system we need to solve has the form
\begin{equation}\label{notsing:mainnewton}
\begin{bmatrix} Q^{\top}_{11} H Q_{11} & Q^{\top}_{11} H Q_{12} + Q^{\top}_{11} C Q_{22} \\
Q^{\top}_{12} H Q_{11}+ Q^{\top}_{22} C^{\top} Q_{11} & Q^{\top}_{12} H Q_{12}
\end{bmatrix}\begin{bmatrix}
\delta u \\
\delta p 
\end{bmatrix}
= \begin{bmatrix}
Q^{\top}_{11} f \\
Q^{\top}_{12} f
\end{bmatrix},
\end{equation}
with 
$$
\quad \delta U \in \mathbb{R}^{n \times r}, \delta P \in \mathbb{R}^{r \times (m - r)}.
$$
We also need to estimate $\Lambda$. 
Recall that to get $Q_{12} C = 0$ we have to require that $A^{\top} \Lambda = 0$ exactly,
thus 
$$\Lambda = Z \Phi, $$
where $Z$ is the orthonormal basis for the left nullspace of $A$, 
and $\Phi$ is defined from the minimization of
$$\Vert \nabla F + Z \Phi Y^{\top} \Vert \rightarrow \min,$$
i.e.
$$\Phi = -Z^{\top} \nabla F Y,$$
and
$$\Lambda = -ZZ^{\top} \nabla F Y.$$
Note that $f$ then is just a standard projection of $\nabla F$ on the tangent space:
$$f = -\mathrm{vec} (\nabla F - ZZ^{\top} \nabla F YY^{\top}) = -\mathrm{vec}(\nabla F - (I - UU^{\top}) \nabla F (I - VV^{\top})),$$
which is always a vectorization of a matrix with a rank not larger than $2r$.
Moreover, 
\begin{equation}\label{notsing:g1def}
g_1 = Q^{\top}_{11} f = (V^{\top} \otimes I) f = 
\end{equation}
$$
- \mathrm{vec}((\nabla F - (I - UU^{\top}) \nabla F (I - VV^{\top})) V) = \mathrm{vec}(-\nabla F V),
$$
and the second component
\begin{equation}\label{notsing:g2def}
g_2 = Q^{\top}_{12} f = -(Y^{\top} \otimes (US_1)^{\top}) f =
\end{equation}
$$  \mathrm{vec}((U S_1)^{\top}(\nabla F - (I - UU^{\top}) \nabla F (I - VV^{\top}))Y) =  \mathrm{vec}( S^{\top}_1 U^{\top}\nabla F Y).$$
The solution is recovered from $\delta u$, $\delta p$ as
$$\delta a = (V \otimes I_n) \delta u - (Y \otimes (US_1)) \delta p,$$
or in the matrix form,
$$\delta A = \delta U V^{\top}  - U S_1 \delta P Y^{\top},$$
and the error in $A$ (which we are interested in) is given by 
$$\Vert \delta A \Vert_F^2 = \Vert \delta U \Vert_F^2 + \Vert S_1 \delta P \Vert_F^2.$$
We can further simplify the off-diagonal block. Consider
$$\widehat{C} = Q^{\top}_{11} C Q_{22} = (V^{\top} \otimes I) (I \otimes \Lambda) T (I \otimes V) (I \otimes S_2).$$
Then multiplication of this matrix by a vector takes the form:
$$ \mathrm{mat}(\widehat{C} \mathrm{vec}(\Phi))  = \Lambda (V S_2 \Phi)^{\top} V = \Lambda \Phi^{\top} S^{\top}_2 V^{\top} V = \Lambda \Phi^{\top} S^{\top}_2,$$
thus
$$
\widehat{C} = (S_2 \otimes \Lambda) T_{r, n-r}.
$$
\subsection{Retraction}\label{sec:retraction}
Note that since we assumed that the initial points satisfy the constraints
\begin{equation} \label{notsing:deseqs}
AY = 0,\quad Y^{\top} Y = I_{m-r},
\end{equation}
after doing each step of the Newton algorithm we have to perform the retraction back to the manifold $\wMr^{tot}$. 
One such possible retraction is the following. Define a map 
$$R: \wMr^{tot}  \oplus \mathcal{H} \to \wMr^{tot},$$
$$R((A,Y),(\delta Y, \delta A)) = (R_1(A, \delta A), R_2(Y, \delta Y))$$
$$R_2(Y,\delta Y) = \textbf{qf}(Y + \delta Y) = Y_1,$$
$$R_1(A,\delta A) = A(I - Y_1 Y_1^{\top})$$
where $\textbf{qf}(\xi)$ denotes the Q factor of the QR-decomposition of $\xi$, which is a standard second-order retraction on the Stiefel manifold \cite[Example 4.1.3]{absil2009optimization}.

In the fast version of the Newton method which will be derived later, we will also use the standard SVD-based retraction which acts on the matrix $A+\delta A$ simply by truncating it's SVD to the rank $r$. It is also known that given the SVD of the matrix $A$ then for certain small corrections $\delta A$, the SVD of $A+\delta A$ can be recomputed with low computational cost as described in \cite[\S 3
]{vandereycken2013low}. It is also known to be a second order retraction \cite{absil2012projection}. We denote this operation by $R_{\text{SVD}}(A, \delta A)$.
\subsection{Basic Algorithm}
The basic Newton method on the manifold $\wMr$ is summarized in the following algorithm
\\
\begin{algorithm}[H]
	\caption{Newton method} \label{notsing:alg1}
	\begin{algorithmic}[1]
  \STATE{Initial conditions $A_0,Y_0$, functional $F(A)$} and tolerance $\varepsilon$
  \\
  \STATE{Result: minimum of $F$ on $\Mr$}	
	\WHILE{$\Vert \delta U^i \Vert^2_F + \Vert (S_1)^i \delta P^i \Vert^2_F>\varepsilon$}
	
	\STATE{	
		$U^i,S^i,V^i$ = svd($A^i$)}
		\\
	\STATE{Solve 
		$\widehat{G}^{loc}
		\begin{bmatrix}
		\delta u^i \\
		\delta p^i
		
		\end{bmatrix}
		=
		\begin{bmatrix}
		g_1 \\
		g_2
				
		\end{bmatrix}
		$ where $\widehat{G}^{loc}, g_1, g_2 $ are defined by formulas \cref{notsing:Gdef,notsing:g1def,notsing:g2def}}
		\\
		
		\STATE{
		$\delta A^i = \delta U^i V^{i \top} - U^i (S_1)^i \delta P^i Y^{i \top}$
		\\
		$\delta Y^i = V^i (S_2)^i \delta P^i$}
		\\
		\STATE{
		$A^{i+1},Y^{i+1}  = R((A^i,\delta A^i),(Y^i,\delta Y^i))$}
		\\
		\STATE{
		$i = i +1$ }
	
\ENDWHILE
\RETURN{$A^i$}
\end{algorithmic}
\end{algorithm}
Even though this algorithm demonstrates that our approach is rather inefficient in terms of memory and complexity -- storing and doing multiplications by $Y$ are of order $O(m^2)$ instead of desired $O((n+m)r)$. We resolve this issue in the next section. Analysis of the convergence and behavior of the algorithm near the points $(A,Y)$ corresponding to matrices of strictly smaller rank is performed in \cref{sec:behavior}.
\subsection{Semi-implicit parametrization of the tangent space}
\label{sec:trick}
Let us introduce a new variable 
$$\delta \Phi^{\top} = Y \delta P^{\top}, $$
$$\delta \Phi \in \mathbb{R}^{r \times m}.$$
This results in an implicit constraint on $\delta \Phi$ 
$$ \delta \Phi V= 0.$$
In order to make an arbitrary $\Phi$ satisfy it, we first multiply it by the projection operator $I - VV^{\top},$
$$ \Phi' = \Phi (I-VV^{\top}),$$ or in the matrix form
$$\begin{bmatrix}
\delta u \\
\delta \phi'
\end{bmatrix} =
\begin{bmatrix}
I & 0 \\
0 & I-VV^{\top} \otimes I
\end{bmatrix}
\begin{bmatrix}
\delta u \\
\delta \phi
\end{bmatrix}.
$$
Notice also that  $$\delta P = \delta \Phi Y,$$ and again using the properties of the Kronecker product we obtain
$$
\begin{bmatrix}
\delta u \\
\delta p
\end{bmatrix} =
\begin{bmatrix}
I & 0 \\
0 & Y^{\top}\otimes I
\end{bmatrix}
\begin{bmatrix}
\delta u \\
\delta \phi
\end{bmatrix}.
$$
Denote
$$
\Pi = 
\begin{bmatrix}
I & 0 \\
0 & I-VV^{\top} \otimes I
\end{bmatrix},
$$
$$
W = 
\begin{bmatrix}
I & 0 \\
0 & Y^{\top}\otimes I
\end{bmatrix}.
$$
The equations for the Newton method in the new variables take the following form:
\begin{equation} \label{notsing:advnewton}
\Pi^{\top}W^{\top} \widehat{G}^{loc} W\Pi 
\begin{bmatrix}
	\delta u \\
	\delta \phi
\end{bmatrix} =
\Pi^{\top} W^{\top}
\begin{bmatrix}
	g_1 \\
	g_2
\end{bmatrix},
\end{equation}
where $g_1,g_2, \widehat{G}^{loc}$ are as in \cref{notsing:g1def,notsing:g2def,notsing:Gdef} and the linear system in \cref{notsing:advnewton} is of size $(n+m)r$.
\\
\subsection{Iterative method}
For a large $n$ and $m$ forming the full matrix \cref{notsing:advnewton} is computationally expensive, so we switch to iterative methods.
To implement the matvec operation we need to simplify
$$ \begin{bmatrix}
l_1 \\
l_2 
\end{bmatrix}=\Pi^{\top} W^{\top}  \widehat{Q}^{loc} W \Pi
\begin{bmatrix}
\delta u \\
\delta \phi 
\end{bmatrix},
$$ first.
\\
Direct computation shows that
$$
\begin{bmatrix}
l_1  \\
l_2 
\end{bmatrix} = 
\Pi^{\top} W^{\top}
\left[ 
\begin{array}{c}
(V^{\top} \otimes I) H (V \otimes I) \delta u -
\\ (V^{\top} \otimes I)H(I \otimes U ) \mathrm{vec}(S_1 \delta \Phi  (I - VV^{\top}))-\\
\mathrm{vec}((I-UU^{\top})\nabla F (I-V V^{\top}) \delta \Phi^{\top} S_2)
\\
\\
-(Y^{\top} \otimes I) (I \otimes S_1) (I \otimes U^{\top}) H (V \otimes I) \delta u +\\
(Y^{\top} \otimes I) \mathrm{vec}(S_2 (\delta U)^{\top} (-(I-U U^{\top}) \nabla F)) +
\\ (Y^{\top} \otimes I)(I \otimes S_1) (I\otimes U^{\top} ) H (I \otimes U) \mathrm{vec} (S_1 \delta \Phi (I-VV^{\top}))
\end{array}
\right],
$$
and the right hand side has the following form:
$$\begin{bmatrix}
g_1^{\prime} \\
g_2^{\prime}
\end{bmatrix} = 
\Pi^{\top} W^{\top}
\begin{bmatrix}
-\mathrm{vec} \nabla F V \\
(Y^{\top} \otimes I) \mathrm{vec} (S_1 U^{\top} \nabla F)
\end{bmatrix}.
$$
Since both $\Pi$ and $W$ only act on the second block it is easy to derive the final formulas:
\begin{dmath}
\label{eq:mv1}
l_1 = (V^{\top} \otimes I) H (V \otimes I) \delta u - (V^{\top} \otimes I)H(I \otimes U ) \mathrm{vec}(S_1 \delta \Phi  (I - VV^{\top}))-\mathrm{vec}((I-UU^{\top})\nabla F (I-V V^{\top}) \delta \Phi^{\top} S_2),
\end{dmath}
\begin{dmath}
\label{eq:mv2}
l_2  = 
-(I - VV^{\top} \otimes I) (I \otimes S_1) (I \otimes U^{\top}) H (V \otimes I) \delta u + \mathrm{vec}(S_2 (\delta U)^{\top} (-(I-U U^{\top}) \nabla F (I - V V^{\top})) +(I-VV^{\top}\otimes I)(I \otimes S_1) (I\otimes U^{\top} ) H (I \otimes U) \mathrm{vec} (S_1 \delta \Phi(I-VV^{\top})),
\end{dmath}
\begin{equation}\label{eq:g1_mv}
g_1^{\prime} = -\mathrm{vec} \nabla F V,
\end{equation}
\begin{equation}\label{eq:g2_mv}
g_2^{\prime} = \mathrm{vec} (S_1 U^{\top} \nabla F (I-VV^{\top})).
\end{equation}
Note that in new variables we obtain
$$\delta A = \delta U V^{\top} - U S_1 \delta \Phi,$$
and 
$$A+\delta A = U(SV^{\top} - S_1 \delta \Phi) + \delta U V^{\top}.$$
Using this representation of $A+\delta A$ we can recompute its SVD without forming the full matrix as described in \cref{sec:retraction}. 
This allows us not to store the matrix $A$ itself but only the $U$, $S$ and $V$ that we get from the SVD.
We obtain the following algorithm
\begin{algorithm}[H] 
	\caption{Fast Newton method} \label{notsing:alg2}
	\begin{algorithmic}[1]
		\STATE{Initial conditions $U_0,S_0,V_0$, functional $F(A)$} and tolerance $\varepsilon$
		\\
		
		\STATE{Result: minimum of $F$ on $\Mr$}
		
		\WHILE{$\Vert \delta U^i \Vert^2 + \Vert (S_1)^i \delta \Phi^i \Vert^2_F>\varepsilon$}
		
		\STATE{
		Solve linear system with matvec defined by formulas (\ref{eq:mv1}),(\ref{eq:mv2}) and right hand side defined by formulas \cref{eq:g1_mv,eq:g2_mv} using GMRES, obtaining $\delta u^i, \delta \phi^i.$}
				\\
				\STATE{
			$\delta A^i = \delta U^i V^{i \top} - U^i (S_1)^i \delta \Phi^i$}
			\\
			\STATE{
			$U^{i+1},S^{i+1},V^{i+1} = R_{\text{SVD}}(A^i,\delta A^i)$}
			\\
			\STATE{
			$i = i +1$
}	
		\ENDWHILE
		\RETURN{$U^i,S^i,V^i$}
	\end{algorithmic}
\end{algorithm}
\pagebreak
\section{Technical aspects of the implementation}\label{notsing:techstuff}
\subsection{Computation of the matvec and complexity}
To efficiently compute the matvec for a given functional $F$ one has to be able to evaluate the following expressions of the first order:
\begin{equation} \label{notsing:first-order}
\nabla F V , (\nabla F)^{\top} U ,\nabla F \delta X,\delta X \nabla F, 
\end{equation}
and of the second order:
\begin{equation} \label{notsing:second-order}
(V^{\top} \otimes I) H (V \otimes I) \delta x, (V^{\top} \otimes I) H (I \otimes U) \delta x
\end{equation}
$$(I\otimes U^{\top}) H (V \otimes I) \delta x,(I \otimes U^{\top}) H (I \otimes U) \delta x.$$
The computational complexity of \cref{notsing:alg2} depends heavily on whether we can effectively evaluate \cref{notsing:first-order,notsing:second-order}, which, however, any similar algorithm requires.
Let us now consider two examples.
\subsection{Matrix completion} \label{notsing:mc}
Given some matrix $B$ and a set of indices $\Gamma$ define 
$$F(x) = \frac{1}{2}\sum_{(i,j) \in \Gamma} (x_{i,j} - B_{i,j})^2 \to \min, x \in \Mr.$$
Then $$\nabla F_{ij} = x_{ij} - B_{ij}, (i,j) \in \Gamma,$$
$$\nabla F_{ij} = 0, (i,j) \notin \Gamma.$$
Then $H$ in this case is a diagonal matrix with ones and zeroes on the diagonal, the exact position of which are determined by $\Gamma$. Assuming that the cardinality of $\Gamma$ is small, the matrix products from \cref{notsing:first-order} can be performed efficiently by doing sparse matrix multiplication. Note that multiplication by $H$ in \cref{notsing:second-order} acts as a mask, turning the first factor into a sparse matrix, allowing for effective multiplication by the second factor.
\subsection{Approximation of a sparse matrix} \label{notsing:ma}
Consider the approximation functional
$$F(x) = \frac{1}{2} \| x - B \|_F^2 \to \min, x \in \Mr,$$
and $B$ is a sparse matrix.
Then
$$\nabla F = x-B,$$ and 
expressions \cref{notsing:second-order}, can be heavily simplified by noticing that $H$ in this case is the identity matrix and the sparseness of $B$ is used to evaluate \cref{notsing:first-order}.
\section{Numerical results}\label{notsing:numerical}
\subsection{Convergence analysis}
\cref{notsing:alg2} was implemented in Python using \texttt{numpy} and \texttt{scipy} libraries.
We tested it on the functional described in \cref{notsing:ma} for $B$ being the matrix constructed from the MovieLens 100K Dataset \cite{harper2016movielens}, so $n=1000,m=1700$ and $B$ has $100000$ non-zero elements.
Since the pure Newton method is only local, for a first test we choose small random perturbation (in the form $0.1 \mathcal{N} (0,1)$) of the solution obtained via SVD as initial condition. We got the following results for various $r$ (see \cref{fig:ma}).
\begin{figure}[htp]
	\centering
	\begin{subfigure}{.45\textwidth}
		\centering
		     	\includegraphics[width=\linewidth]{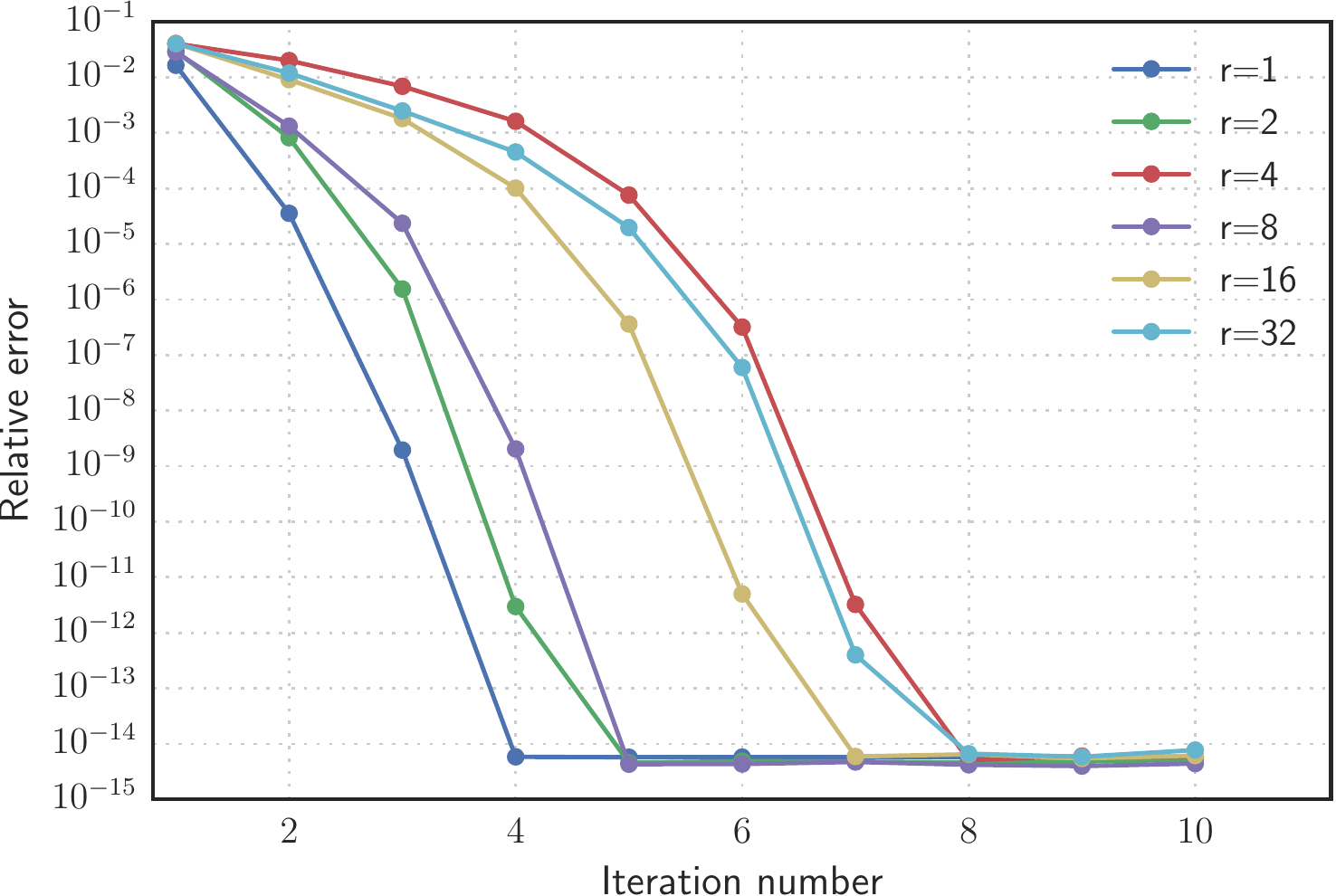}
		     	\caption{Close initial guess. }
		     	\label{fig:ma}
	\end{subfigure}%
\begin{subfigure}{.45\textwidth}
	\centering
	\begin{tabular}{cccc}
		\includegraphics[width =0.45\linewidth]{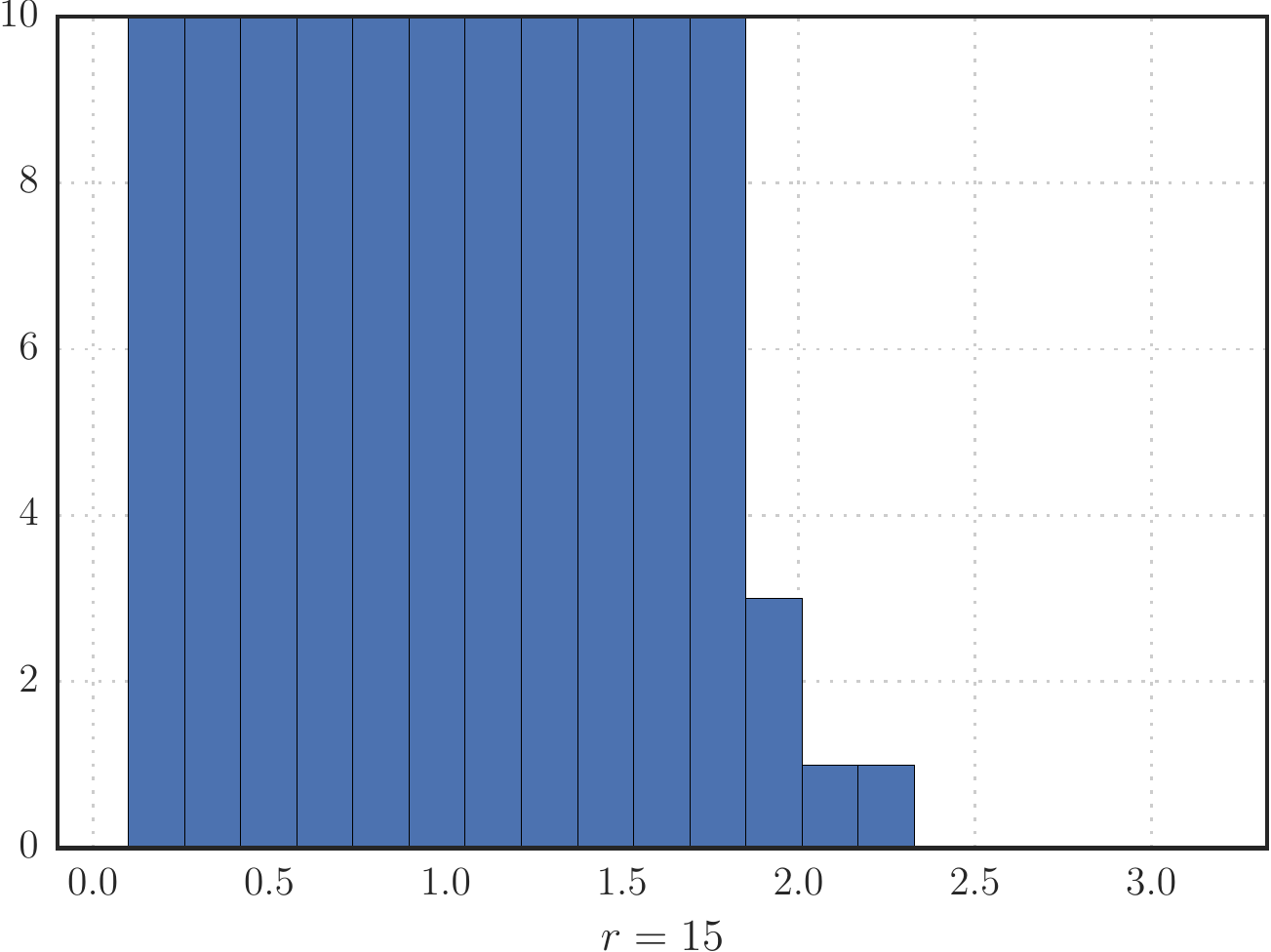} &
		\includegraphics[width =0.45\linewidth]{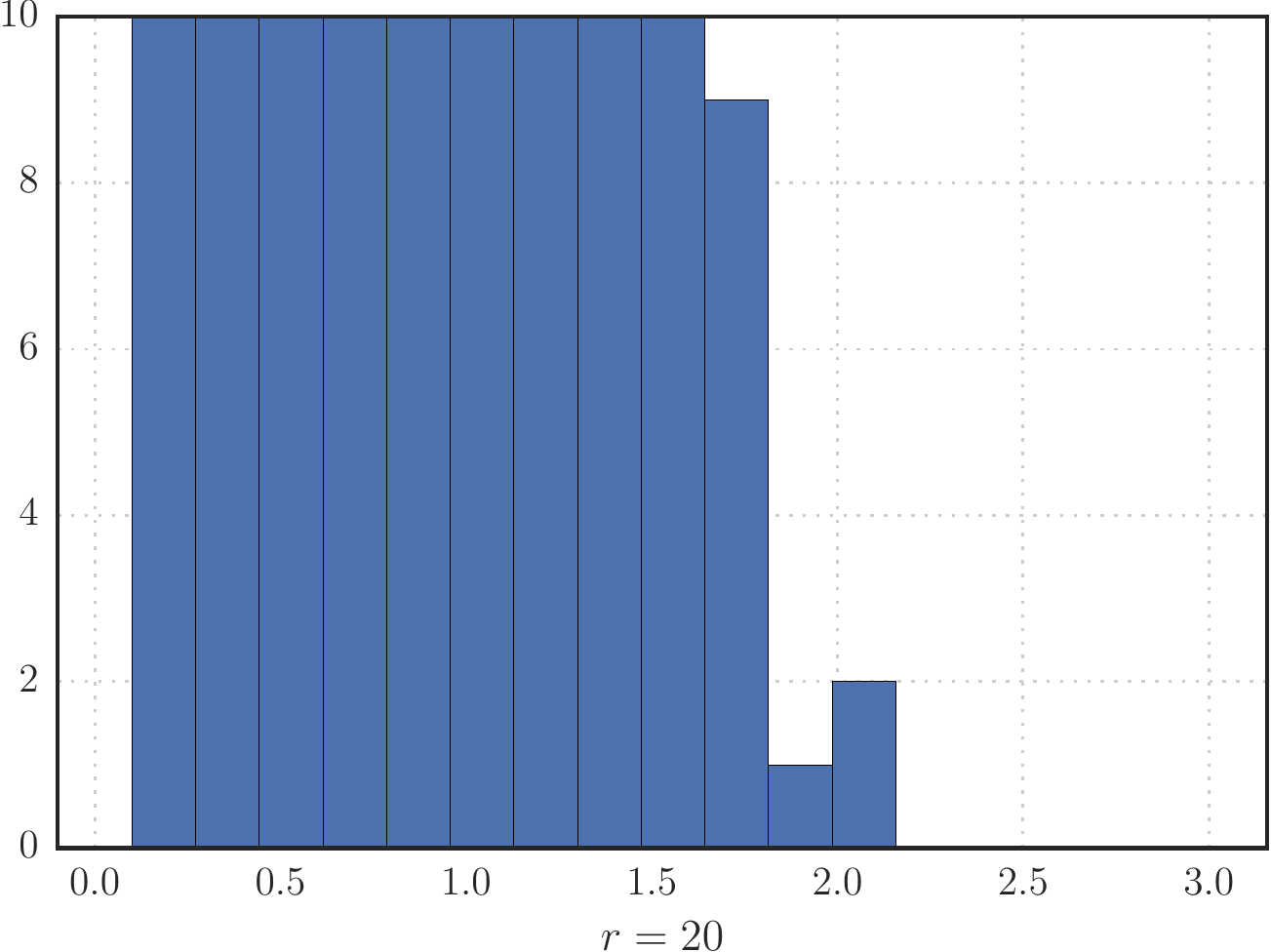} \\
		\includegraphics[width =0.45\linewidth]{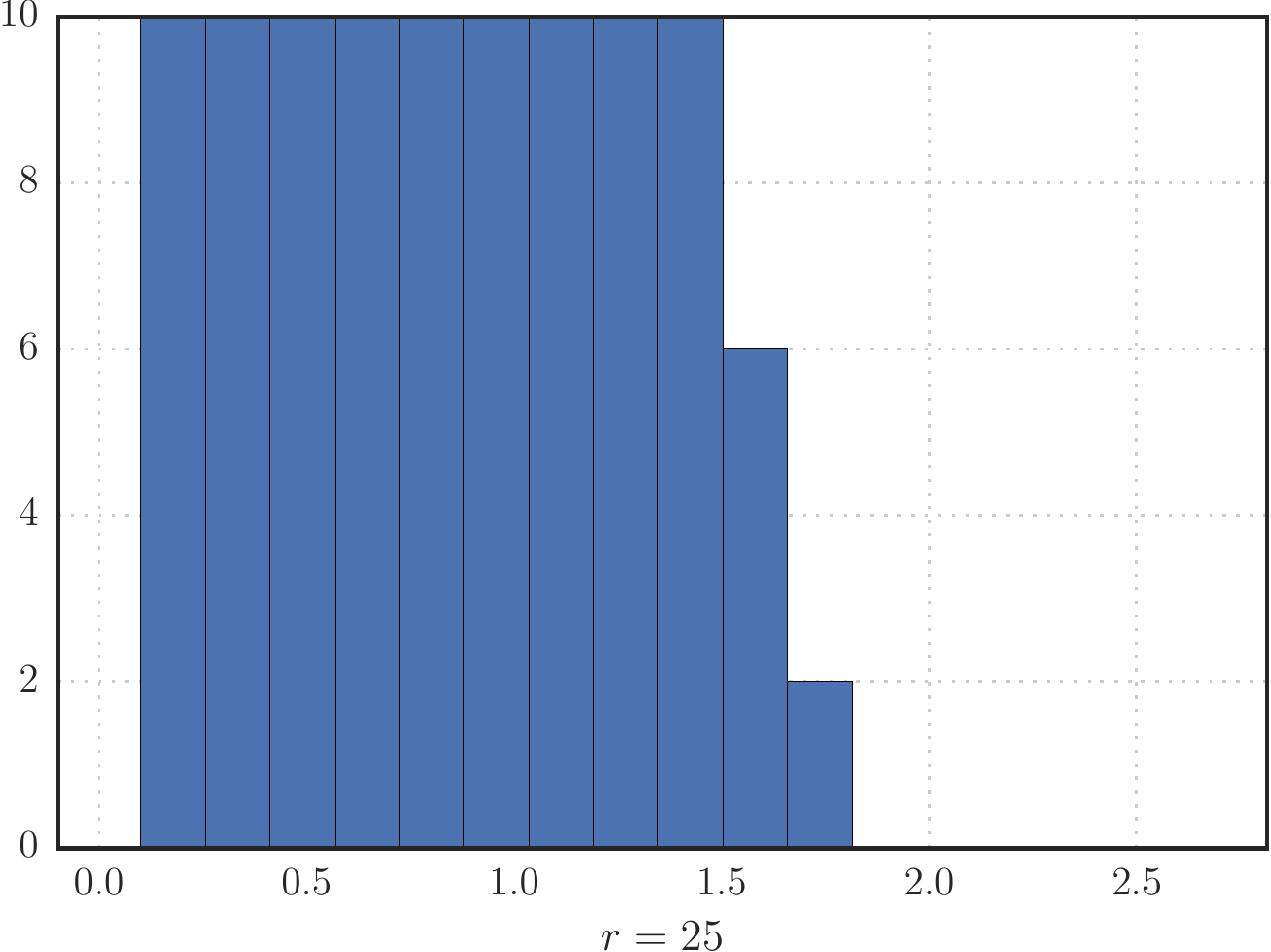} &
		\includegraphics[width =0.45\linewidth]{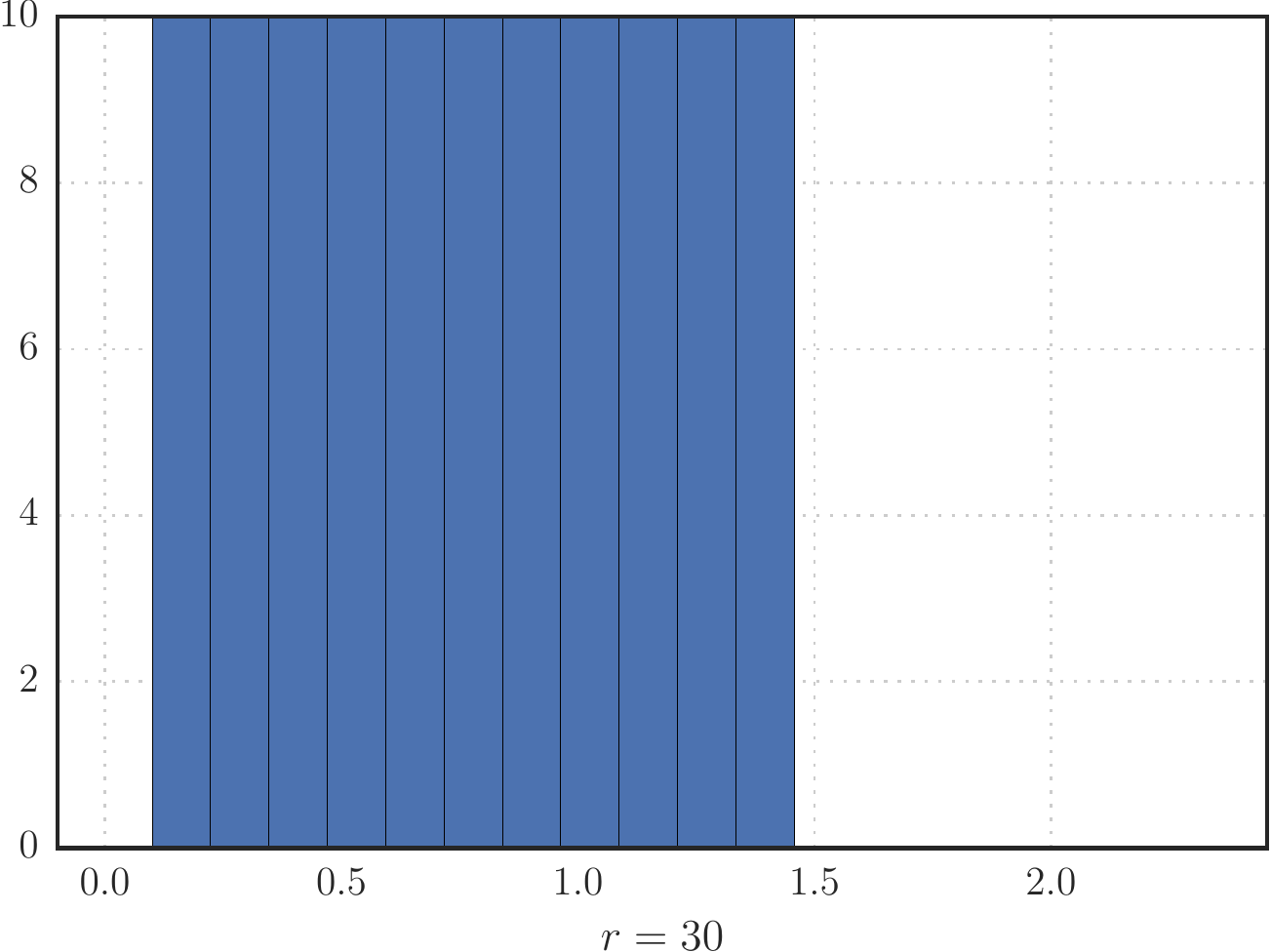} 
	\end{tabular}
	\caption{Convergence histograms.}
	\label{fig:hist}
	\end{subfigure}
	\caption{Sparse matrix approximation: test of local convergence.}
	\end{figure}
This shows the quadratic convergence of our method.

Now we fix the rank and test whether the method converges to the exact answer for a perturbation of the form $\alpha \mathcal{N}(0,1)$ for various $\alpha$ and plot a number of convergent iterations vs $\alpha \in [0.1,2.5]$ (see \cref{fig:hist}).
We see that for a sufficiently distant initial condition the method does not converge to the desired answer. To fix this we introduce a simple version of the trust-region algorithms described in \cite{yuan2000review} (to produce initial condition we perform a few steps of the power method). Results are summarized in \cref{diffr-tr}.
\begin{figure}[htp]
	\centering
	\begin{tabular}{cccc}
		\subcaptionbox{$r=20$\label{r20-2}}{\includegraphics[width =0.45\linewidth]{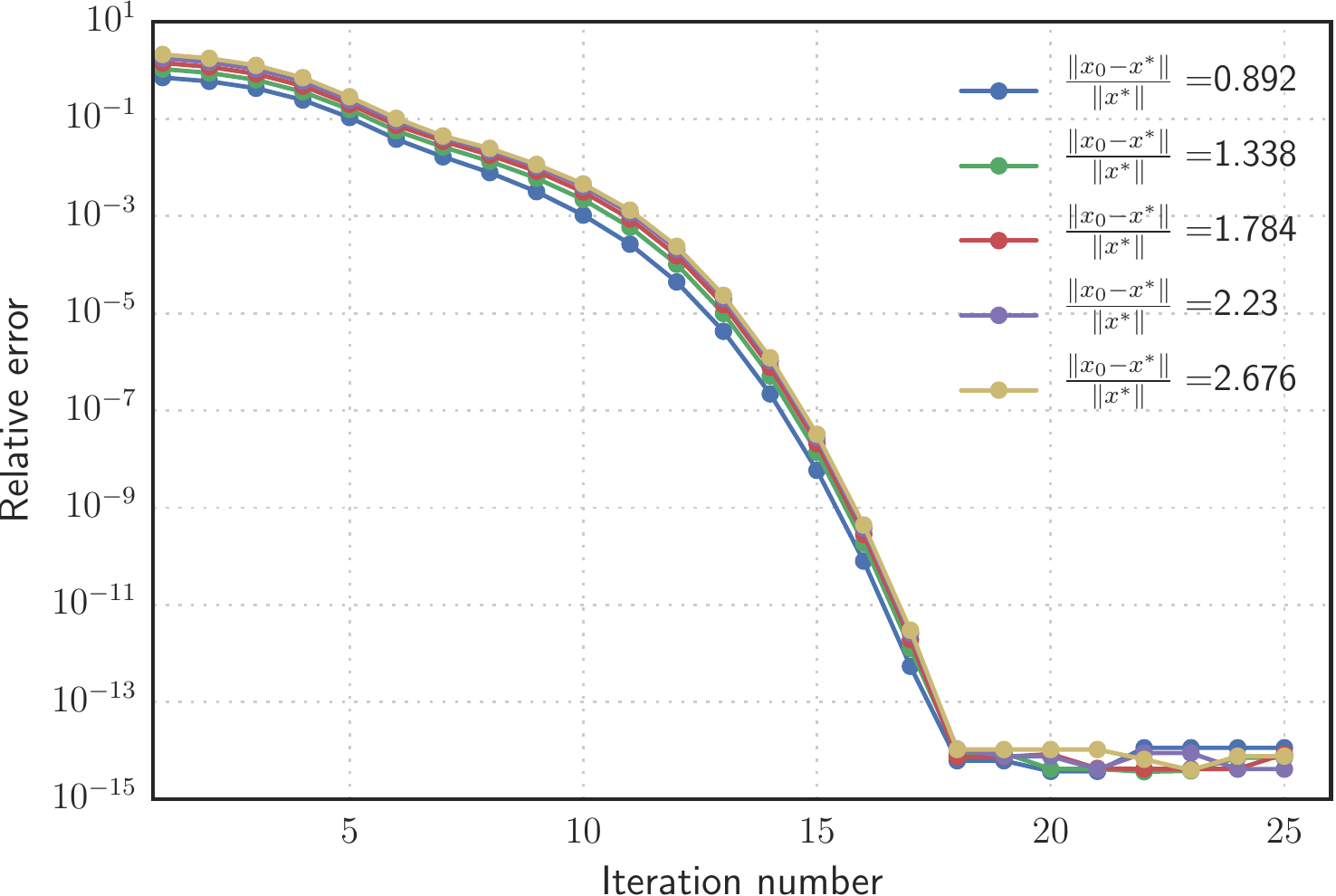}} &
		\subcaptionbox{$r=25$\label{r25-2}}{\includegraphics[width = 0.45\linewidth]{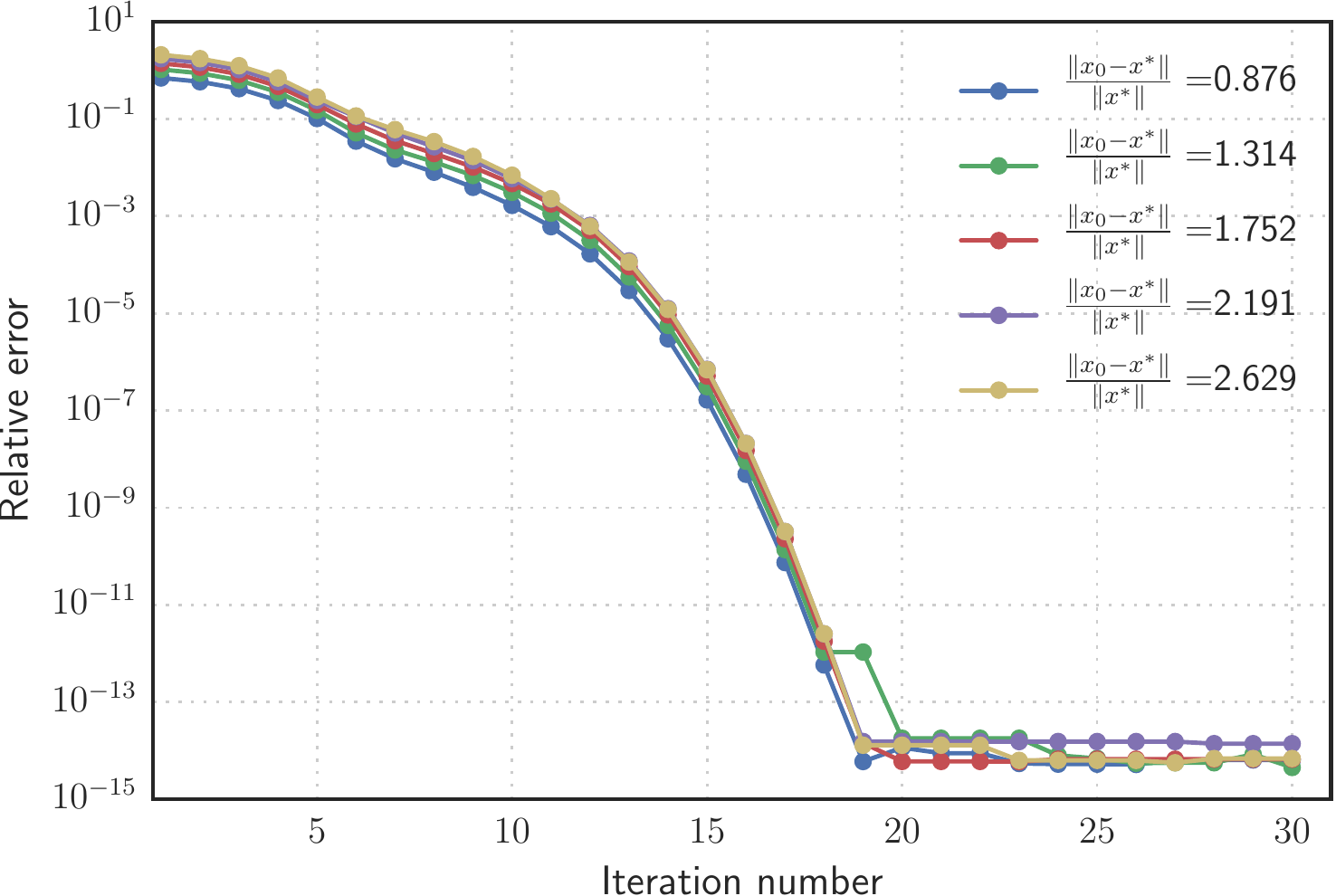}} \\
		\subcaptionbox{$r=30$\label{r30-2}}{\includegraphics[width = 0.45\linewidth]{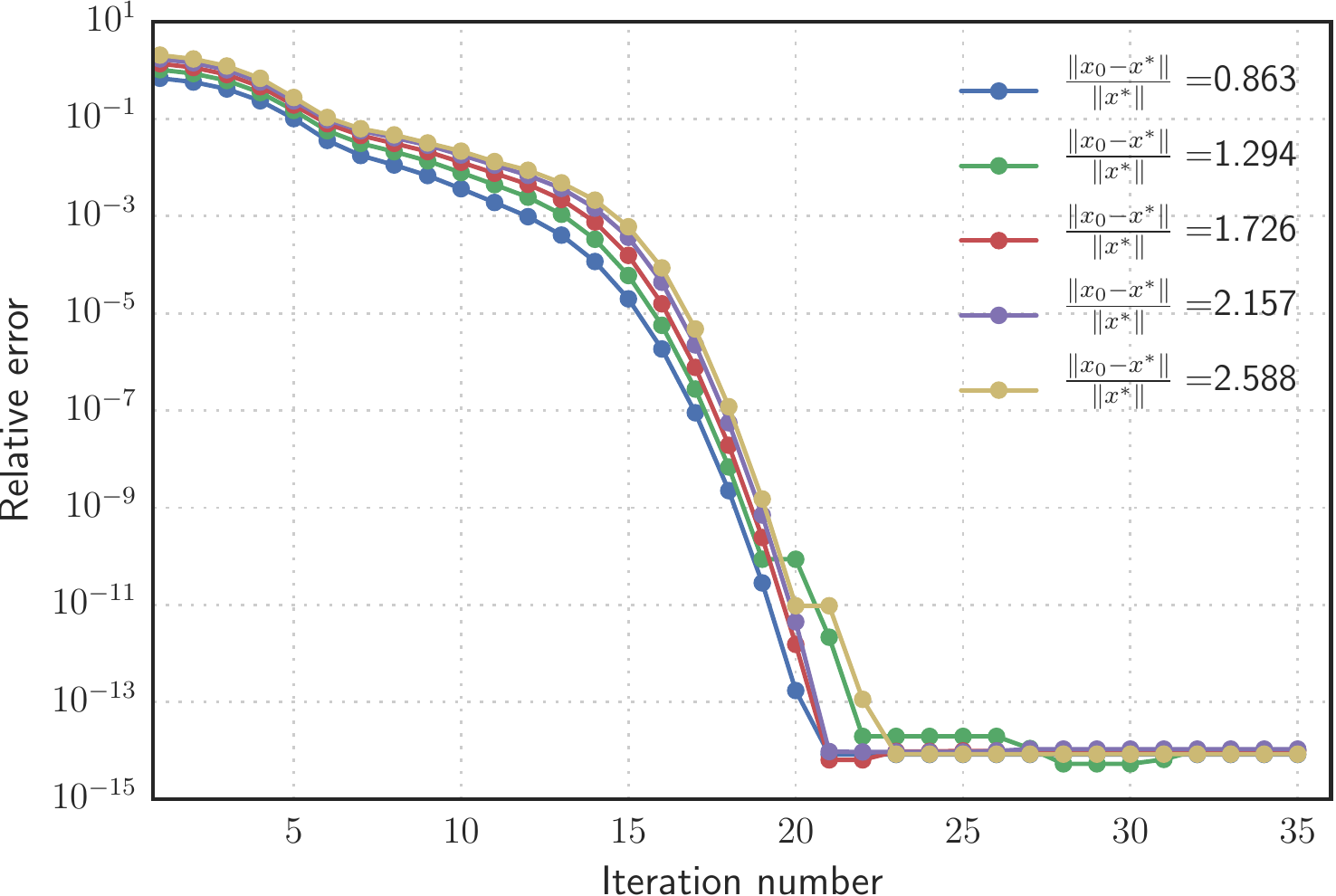}} &
		\subcaptionbox{$r=35$\label{r35-2}}{\includegraphics[width = 0.45\linewidth]{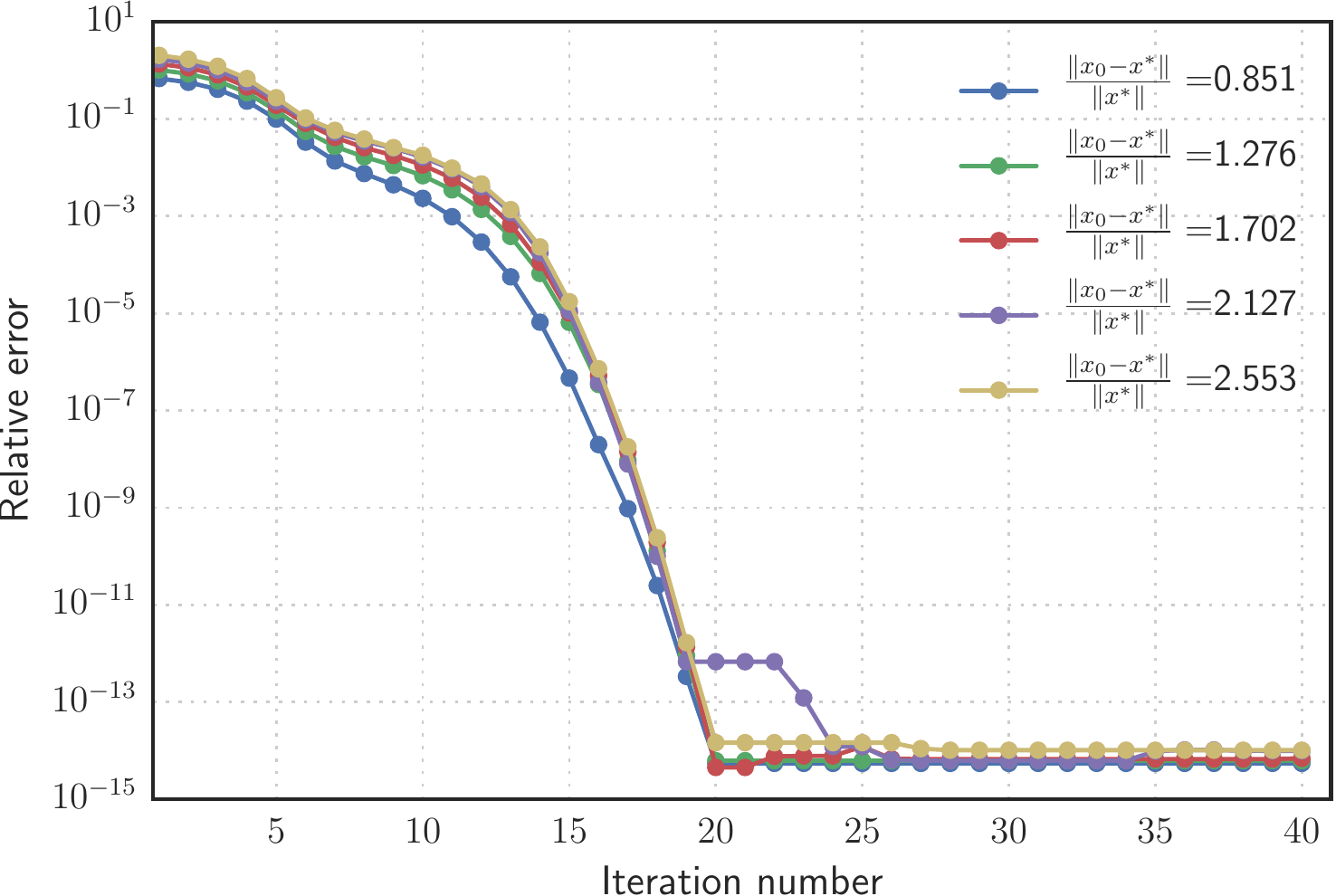}}\\
	\end{tabular}
	\caption{Sparse matrix approximation: trust-region method.}
	\label{diffr-tr}
\end{figure}
We also test our algorithm for the matrix completion problem. As an initial data we choose first $15000$ entries in the database described above. Using the same trust-region algorithm we obtained the following results (see \cref{fig:mc}).

As a final test we show quadratic convergence even in a case when the exact solution is of rank smaller than $r$ for which the method is constructed. To do this we take first $k$ elements of the dataset for various $k$, find the rank $r_0$ of the matrix constructed from these elements, and run the trust-region Newton method for $r=r_0+10$. The results are presented in \cref{fig:mc-final}. 
\begin{figure}[htp]
	\centering
	\begin{tabular}{cc}
		\subcaptionbox{Trust region method.\label{fig:mc}}{\includegraphics[width =0.45\linewidth]{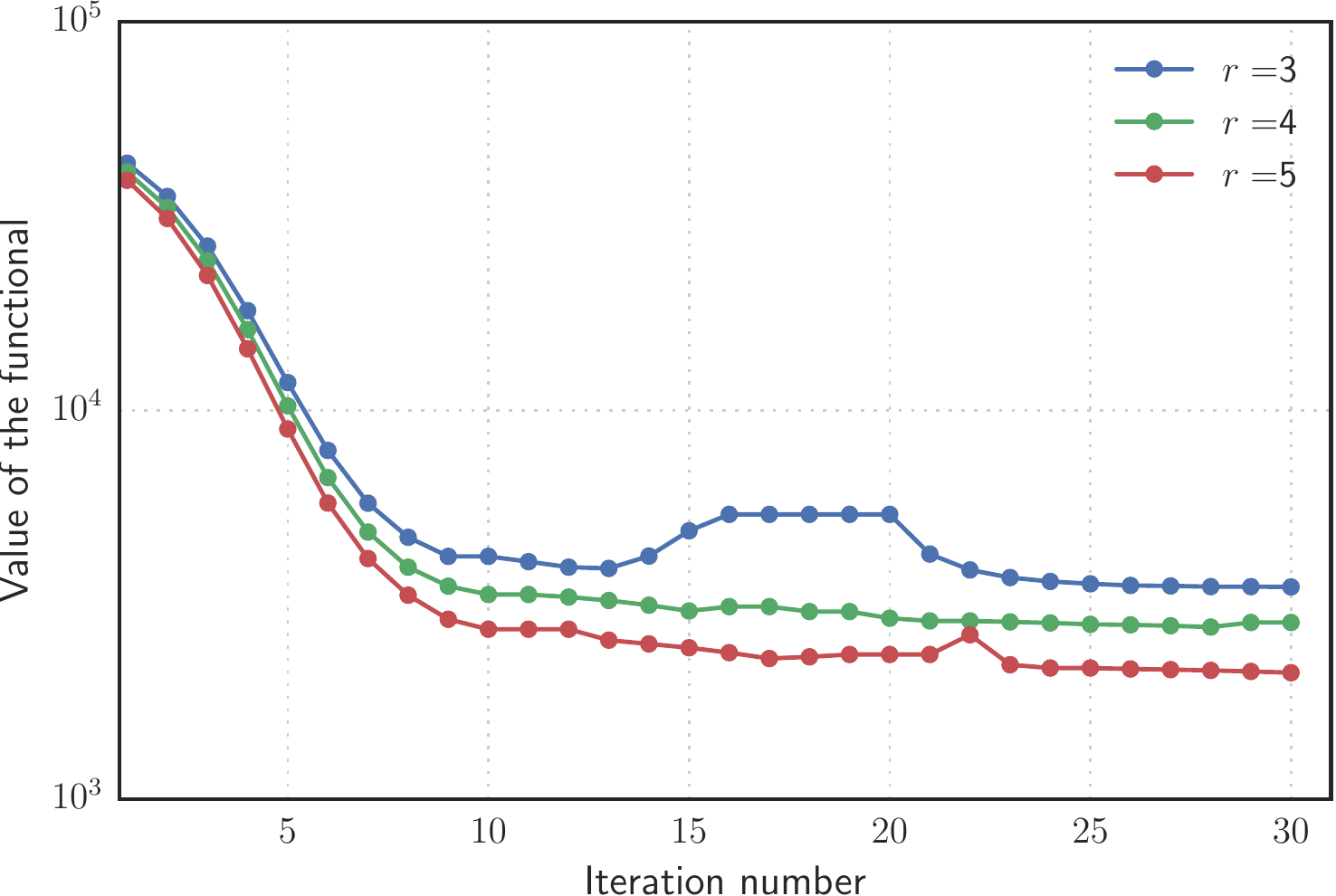}} &
		\subcaptionbox{Quadratic convergence in the case of rank defficiency.\label{fig:mc-final}}{\includegraphics[width = 0.45\linewidth]{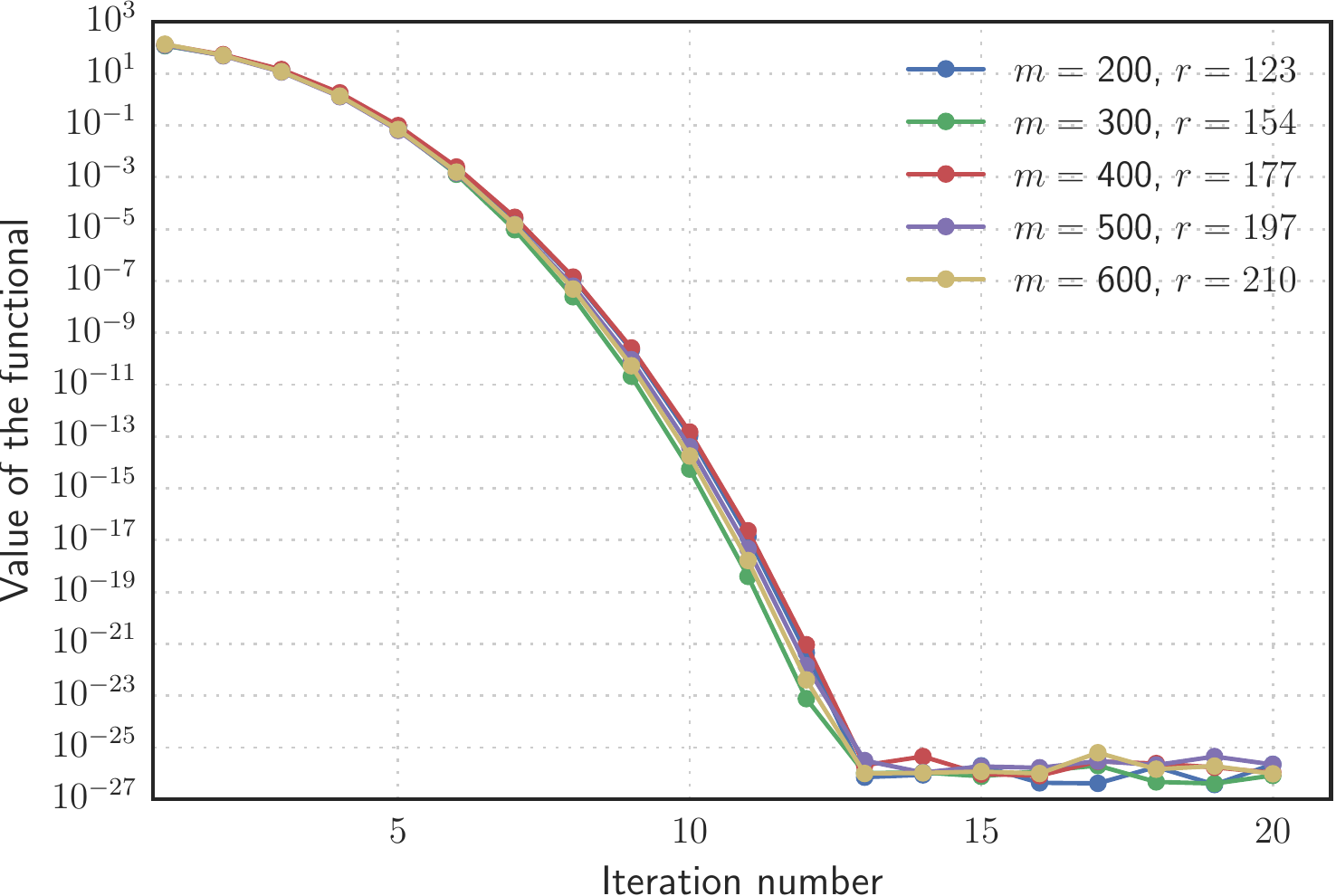}} \\
	\end{tabular}
	\caption{Matrix completion tests.}
\end{figure}
\subsection{Behavior of the algorithm in cases of rank deficiency and underestimation}
\label{sec:behavior}
In the Newton equation of \cref{notsing:alg1}, one has to solve a linear system with 
$$\widehat{G}^{loc}= \begin{bmatrix}
Q^{\top}_{11} H Q_{11} & Q^{\top}_{11} H Q_{12} + Q^{\top}_{11} C Q_{22} \\
 Q^{\top}_{12} H Q_{11} + Q^{\top}_{22} C Q_{11}& Q^{\top}_{12} H Q_{12}
\end{bmatrix},
$$
with $H$ the Hessian of the objective function $F : \mathbb{R}^{n \times m} \to \mathbb{R}$, which we can assume to be positive definite. Suppose that a matrix of rank $<r$ is the global minimum of $F$. Then $S_1$ is singular and $\Lambda  = 0$, which in turn imply that $Q_{12} = -Y \otimes  (U S_1)$ is singular and $C = 0$. Hence, the matrix $\widehat{G}^{loc}$ is singular. It is easy to understand the reason of this behavior. The function $\widehat{F}$ defined on $\wMr$ now has non-unique critical point,  --- the set of critical points is now in fact a submanifold of $\wMr$. Thus any vector tangent to this submanifold will be a solution of the Newton system. An analysis of the behavior of the Newton method for such functions is studied in e.g. \cite{decker1980newton}. While we plan to analyze it and prove quadratic convergence in our future work, now we note that Krylov iterative methods handle singular systems if we choose initial condition to be the zero vector, and quadratic convergence has been observed in all the numerical experiments.
 
We will now compare our method (desN) with the reduced Riemannian Newton (rRN) (which is also known as constrained Gauss-Newton method \cite{kressner2016preconditioned}) and CG methods on the fixed-rank matrix manifolds for the approximation problem. The former is obtained by neglecting the curvature term involving $S^{-1}$ in the Hessian (see \cite[Proposition 2.3]{vandereycken2013low}) and for the latter we use the \texttt{Pymanopt} \cite{JMLR:v17:16-177} implementation. We choose $n=m=30, r=10$ and for the first test we compare the behavior of these algorithms in the case of the exact solution being of rank $r_0 < r$ with $r_0 = 5$. In the second test, we study the converse situation when the rank is underestimated --- the exact solution has rank $r_0 > r$ with $r_0=15$. As before, for the reference solution we choose a truncated SVD of the approximated matrix. The results are summarized in the \cref{fig:comp-good,fig:comp-bad}. Note that the case of rank underestimation was also studied in \cref{fig:mc-final}. We observe that the proposed algorithm maintains quadratic convergence in both cases. Even though the reduced Riemannian Newton method is quadratic in the case of rank deficiency, it becomes linear in the case of rank underestimation. This phenomenon is well-known and explained e.g. in \cite[Section 5.3]{kressner2016preconditioned} and is related to the fact that when exact minimum is on the variety this approximate model in fact becomes exact second order model. CG is linear in both cases.
\begin{figure}[htp]
\label{fig:compare-conv}
	\centering
	\begin{tabular}{cc}
		\subcaptionbox{Rank defficiency case \label{fig:comp-good}}{\includegraphics[width =0.45\linewidth]{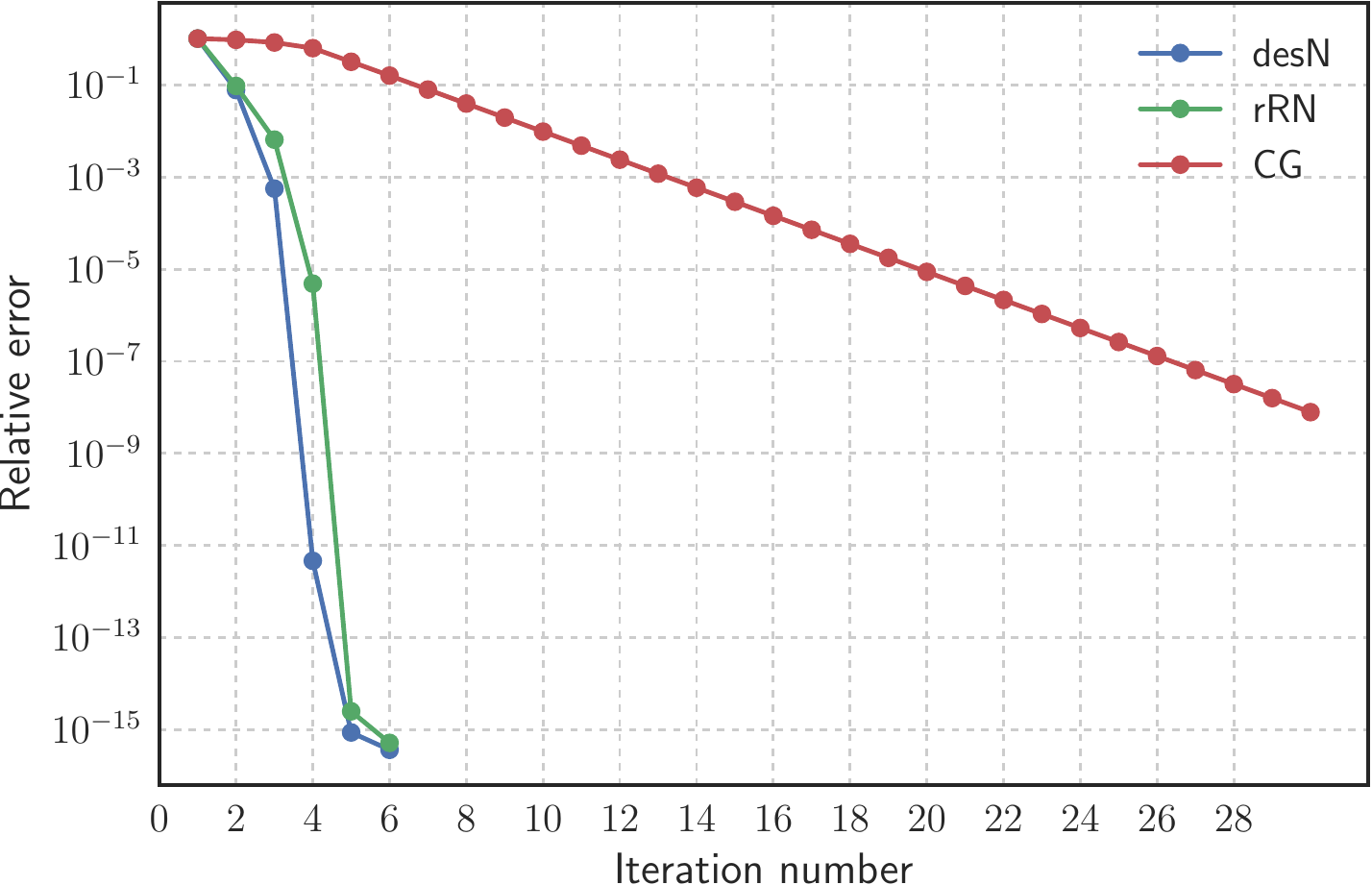}} &
		\subcaptionbox{Rank underestimation.\label{fig:comp-bad}}{\includegraphics[width = 0.45\linewidth]{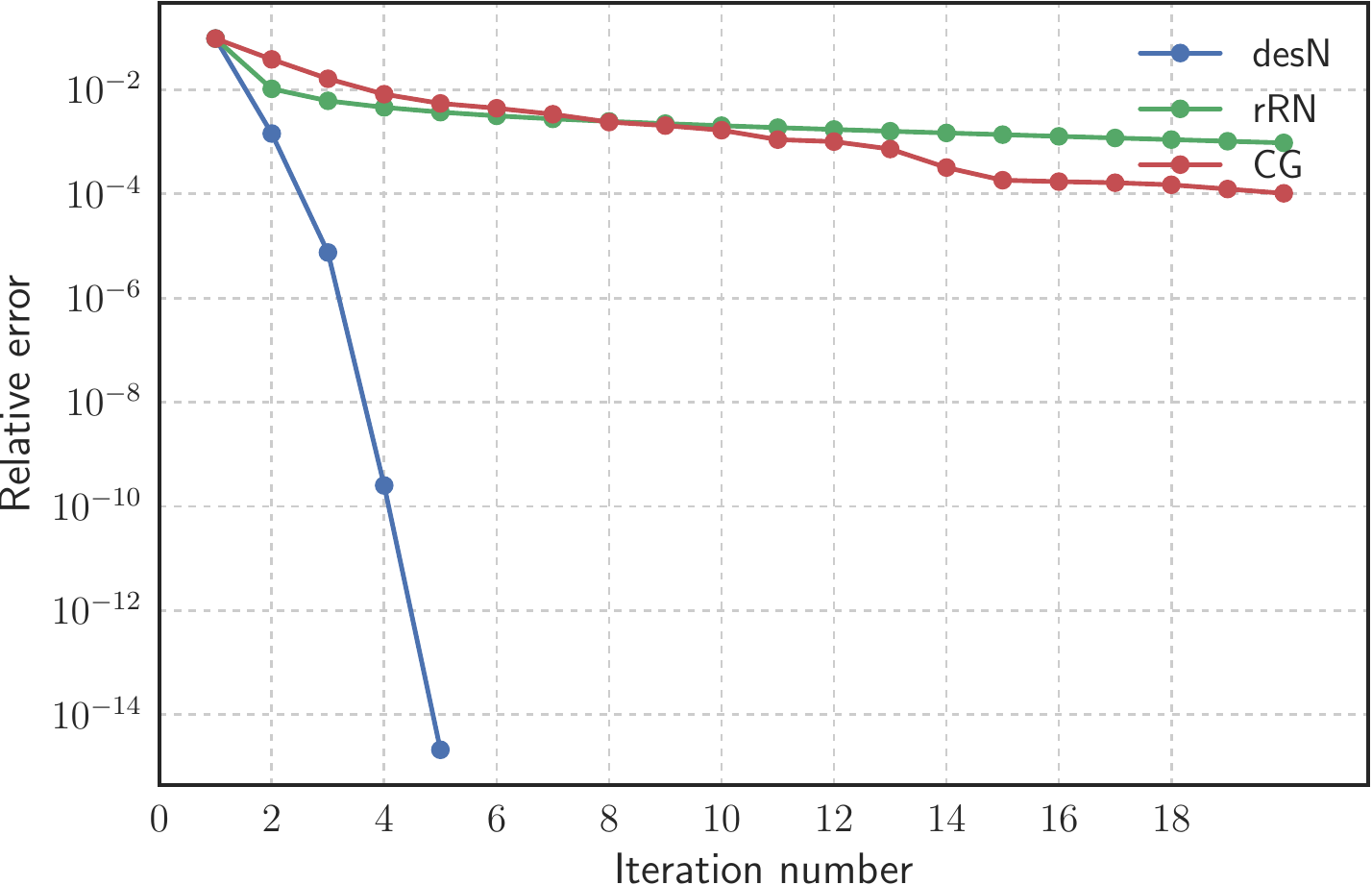}} \\
	\end{tabular}
	\caption{Comparison of the convergence behaviour of various optimization algorithms.}
\end{figure}
\subsection{Comparison with the regularized Newton method}

In this subsection we will compare behavior of our method and of the full Riemannian Newton method on the low-rank matrix variety. To avoid problems with zero or very small singular values we choose some small parameter $\varepsilon$, and in the summands involving $S^{-1}$ in the formulas for the Hessian matrix \cite[Proposition 2.3]{vandereycken2013low} we use the regularized singular values
$$\sigma_i^{\varepsilon} = \max \lbrace \sigma_i, \varepsilon \rbrace,$$ thus obtaining regularized Newton method (regN).
As a test problem we choose a matrix completion problem where the exact answer is known (given sufficiently many elements in the matrix) and of a small rank. To construct such a matrix $A$ we take the uniform grid of size $N=40$ in the square $[-1, 1]^2$ and sample values of the function
$$f(x, y) = e^{-x^2-y^2},$$
on this grid. It is easy to check that this matrix has rank exactly $1$. We choose $r_0 = 5$ and compare relative error with respect to the exact solution $A$, value of the functional as defined in \cref{notsing:mc} and value of the second singular value $\sigma_2$. 
\begin{figure}[htp]
	\centering
	\begin{tabular}{lr}
		\subcaptionbox{Relative error w.r.t the exact answer \label{illcompl-relerr}}{\includegraphics[width =0.45\linewidth]{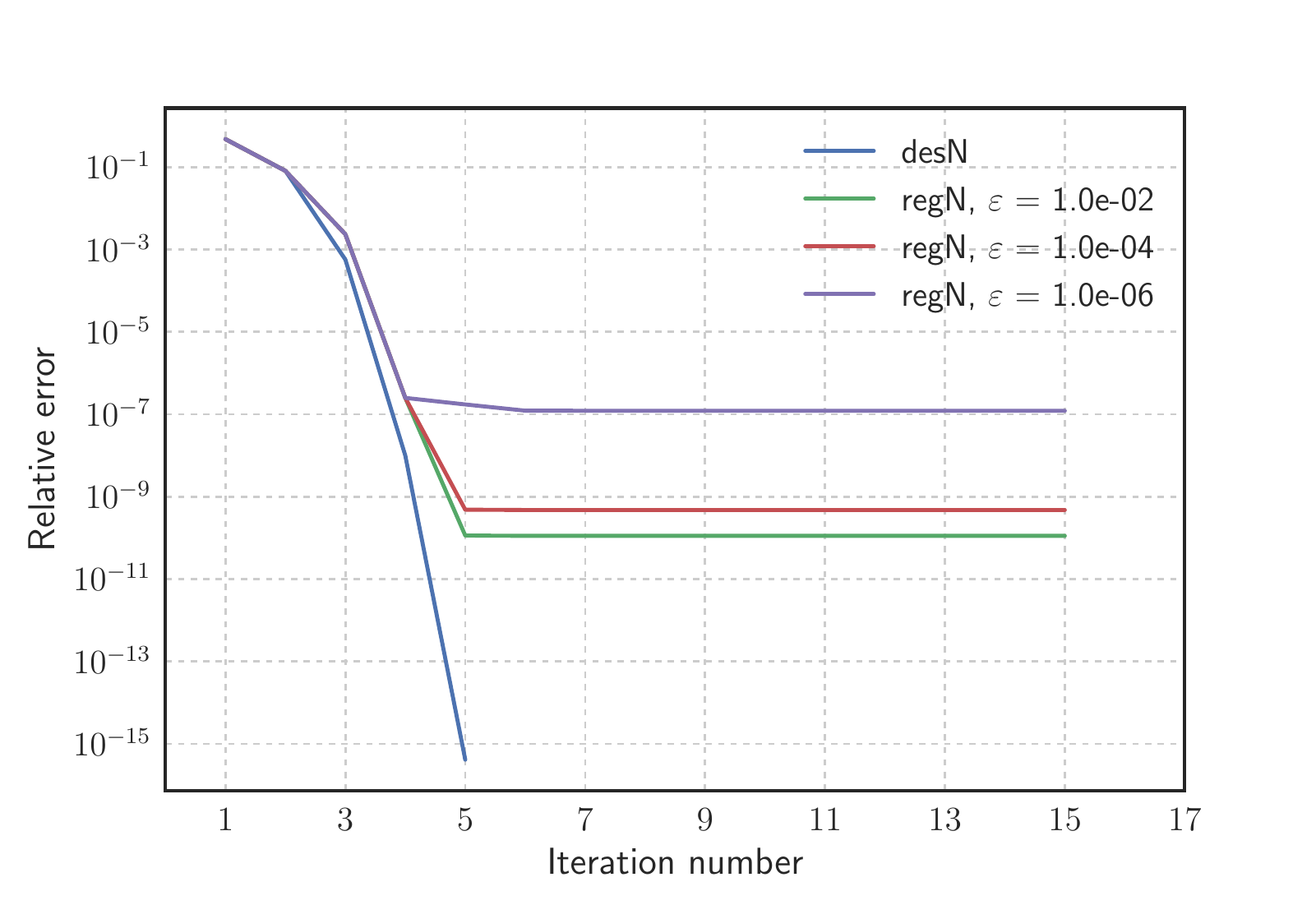}} &
		\subcaptionbox{$\sigma_2$ of the current iterate \label{illcompl-sig2}}{\includegraphics[width = 0.45\linewidth]{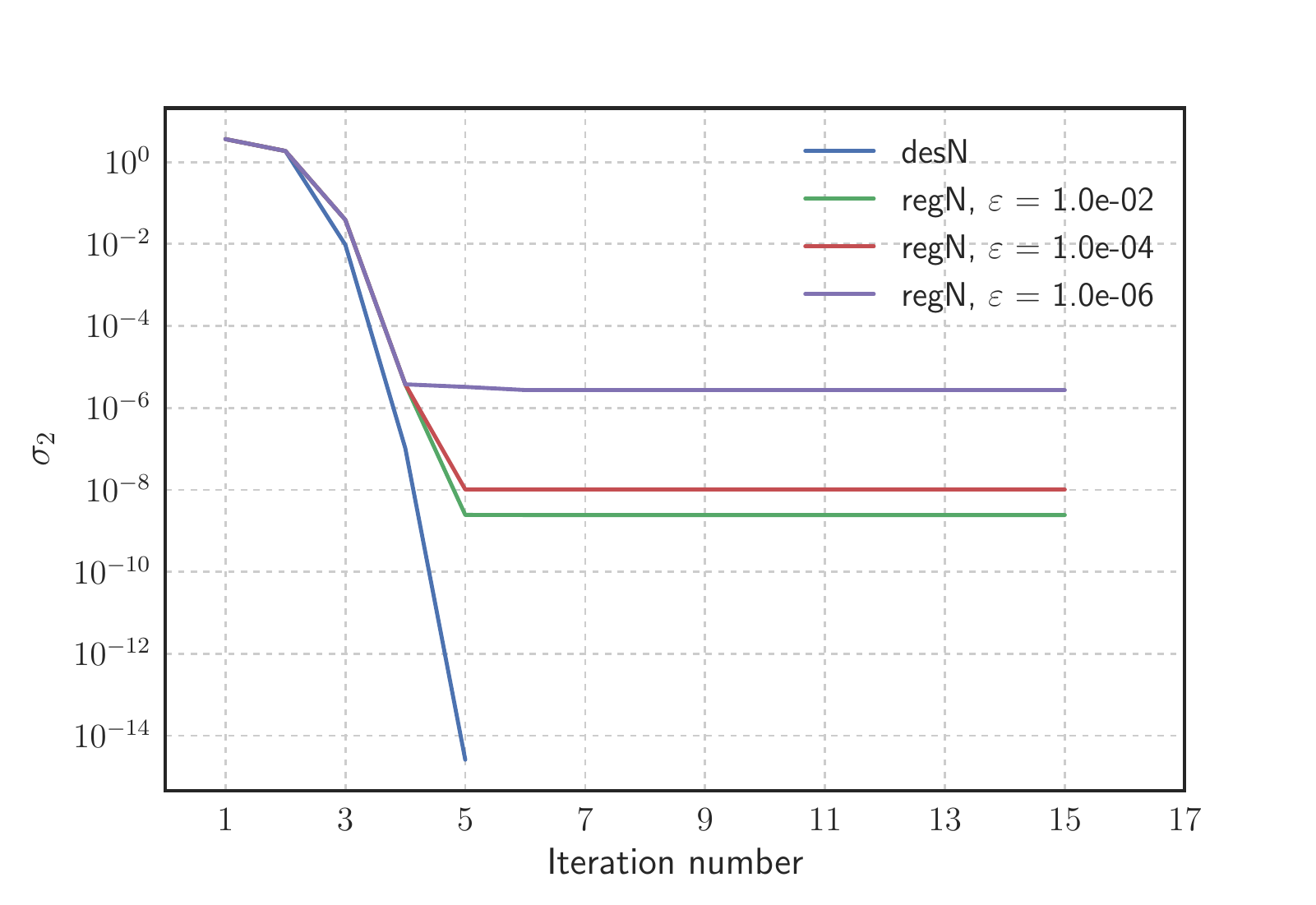}} 
	\end{tabular}
		\subcaptionbox{Value of the functional \label{illcompl-func}}{\includegraphics[width = 0.45\linewidth]{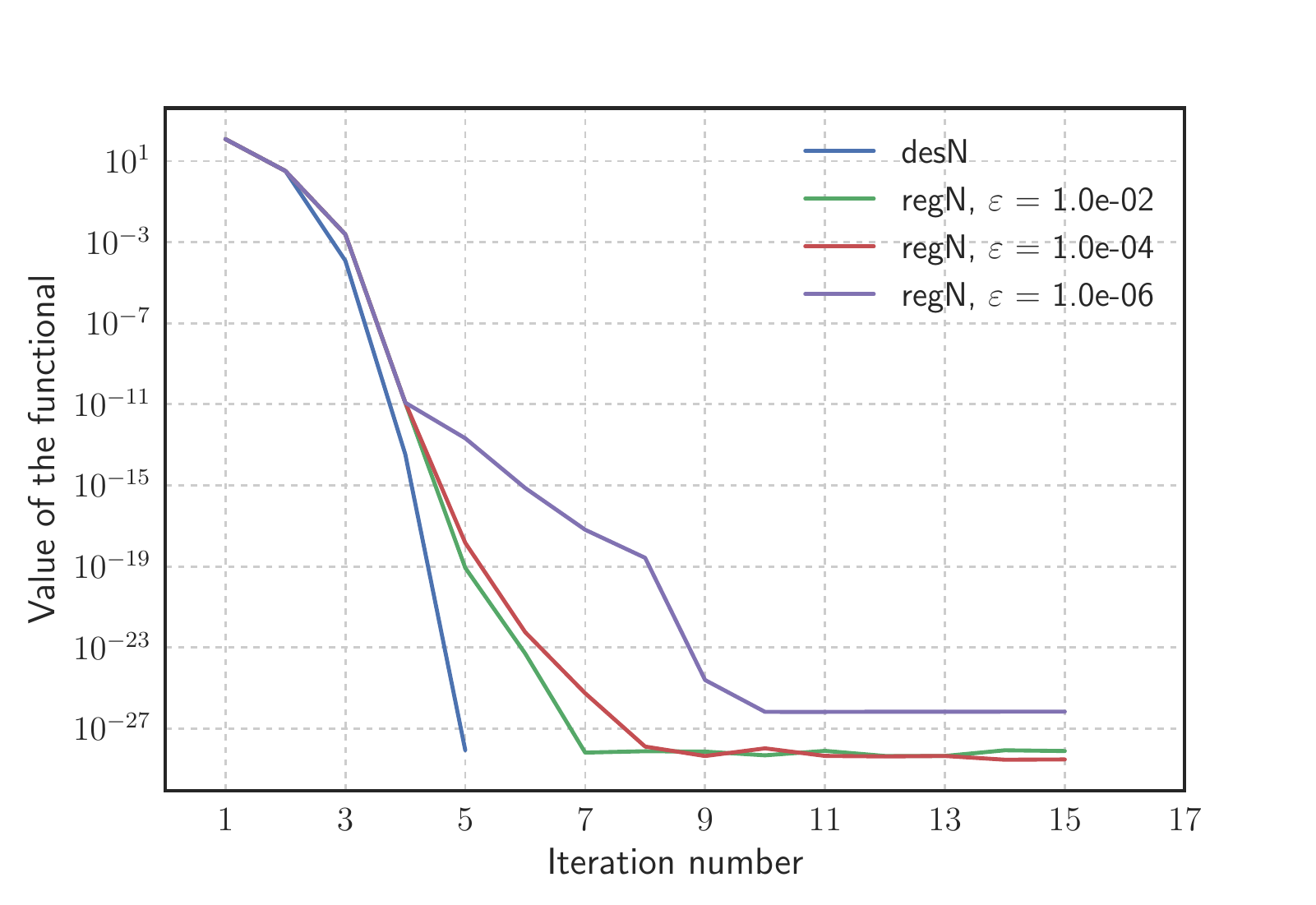}} 
	\caption{Matrix completion tests in the case of strong rank deficiency.}
	\label{fig:ill-compl}
\end{figure}
Results are given in \cref{fig:ill-compl}. We see that even though in all the cases value of the functional goes to $0$, regularized Newton method fails to recover that $\sigma_2$ of the exact answer is in fact $0$ and it's behavior depends on the value of $\varepsilon$.

\section{Related work}
\label{sec:related}
Partly similar approach using so-called \emph{parametrization via kernel} is described in \cite{markovsky2011low, markovsky2013structured}. However, optimization algorithm proposed there is not considered as an optimization problem on a manifold of tuples $(A,Y)$ and is based on two separate optimization procedures (with respect to $A$ and to $Y$, where the latter belongs to the orthogonal Stiefel manifold), thus separating the variables. As stated in \cite{markovsky2013structured} in general it has $O(m^3)$ complexity per iteration.
An overview of Riemannian optimization is presented in \cite{qi2016numerical}.
An example of the traditional approach to bounded-rank matrix sets using Stiefel manifolds is given in \cite{koch2007dynamical} where explicit formulas for projection onto the tangent space are presented. An application of Riemannian optimization to low-rank matrix completion where $\Mr$ is considered as a subvariety in the set of all matrices is given in 
\cite{vandereycken2013low}. The case of $F$ being non-smooth but only Lipschitz is studied in \cite{hosseini2016riemannian}. Theoretical properties of matrix completion such as when exact recovering of the matrix is possible are studied in \cite{candes2010power}. Standard references for introductory algebraic geometry are \cite{hartshorne2013algebraic} and \cite{shafarevich1977basic}. For more computational aspects see \cite{grayson2002macaulay}.
\section*{Acknowledgements}
We are grateful to the anonymous referees for their thorough review of our paper and helpful comments.
This study was supported by the Ministry of Education and Science of the Russian Federation
(grant 14.756.31.0001), by RFBR grants 16-31-60095-mol-a-dk, 16-31-00372-mol-a and by Skoltech NGP program.

\bibliography{notsingular-final}
\bibliographystyle{siamplain}
\end{document}